\newtheorem {theorem} {Theorem}
\newtheorem {proposition} [theorem]{Proposition}
\newtheorem {remark} [theorem]{Remark}
\newcommand{\R}{\ensuremath{\mathbb{R}}}
\newcommand{\CC}{\mathcal{C}}
\newcommand{\CF}{\ensuremath{\mathcal{F}}}
\newcommand{\CO}{\ensuremath{\mathcal{O}}}
\newcommand{\CZ}{\ensuremath{\mathcal{Z}}}
\newcommand{\ov}{\overline}
\newcommand{\T}{\theta}
\newcommand{\f}{\varphi}
\newcommand{\al}{\alpha}
\newcommand{\cl}{\mbox{\normalfont{Cl}}}
\newcommand{\s}{\ensuremath{\mathbb{S}}}
\newcommand{\U}{\ensuremath{\mathcal{U}}}
\newcommand{\bx}{{\bf x}}
\newcommand{\by}{{\bf y}}
\newcommand{\bz}{{\bf z}}
\newcommand{\bg}{{\bf g}}
\newcommand{\bF}{{\bf F}}
\def\p{\partial}
\def\e{\varepsilon}
\title[Zero-Hopf bifurcation in Van der Pol-Duffing equation]
{Zero-Hopf bifurcation in the general Van der Pol-Duffing equation}
\author[M.R. C\^{a}ndido$^{1}$ and C. Valls$^{2}$]
{Murilo R. C\^{a}ndido$^{1}$ \and Claudia Valls$^{2}$}
\address{$^1$ Departamento de Matem\'{a}tica, Universidade
Estadual de Campinas, Rua S\'{e}rgio Buarque de Holanda, 651, Cidade Universit\'{a}ria Zeferino Vaz, 13083--859, Campinas, SP,
Brazil} \email{candidomr@ime.unicamp.br}
\address{$^{2}$ Instituto Superior T\'ecnico, Universidade de Lisboa, 1049-001 Lisboa, Portugal}
\email{cvalls@math.tecnico.ulisboa.pt}
\subjclass[2010]{}
\keywords{Invariant tori, Averaging method, Periodic Solutions}
\begin{document}

\maketitle

\begin{abstract}
We study analytically the coexistence of multiple periodic solutions and invariant tori in the general Van der Pol-Duffing oscillator equations. 
We use  several results related to the averaging method in order to analytically obtain our results. We also provide numerical examples for all the analytical results that we provide.
\end{abstract}

\section{Introduction and statements of the main results}\label{sec1}
In three-dimensional autonomous differential systems a \textit{zero-Hopf equilibrium} is a equilibrium point which has a zero eigenvalue and a pair of purely imaginary eigenvalues. Generically, a \textit{zero-Hopf bifurcation} is a two-parameter unfolding of a three-dimensional autonomous differential equation with a zero-Hopf equilibrium. The scientific literature about zero-Hopf bifurcation is rich and varied, containing works  of several authors for instance Guckenheimer and Holmes \cite{guck81,guck84}, Scheurle and Marsden \cite{sche84}, Han \cite{maoan98} and many others.

However, there is no general theory that is able to describe completely the variety of invariant sets that may bifurcate from zero-Hopf equilibria. Consequently, most of the systems exhibiting  a zero-Hopf bifurcation require to be studied directly. The complexity of the invariant sets that can emerge from zero-Hopf equilibria make this kind of bifurcation a classical indication of local birth of chaos in differential systems. Although, the proof of generic unfolding of Hopf-zero singularities displaying strange attractors is fairly recent (see \cite{baldo20}).

In this paper we investigate the existence of periodic solutions and invariant tori emerging from the origin of the three-dimensional differential system
\begin{equation}\label{gvd}
\begin{aligned}
\dot{x}=&-\nu  \left(x^3-\mu  x-y\right),\\
\dot{y}=&-h z+k x-\alpha  y,\\
\dot{z}=&\beta  y,
\end{aligned}
\end{equation}
where $\alpha$, $h$, $\beta$, $k$, $\nu$ and $\mu$ are real parameters. This system is a generalization of the Van der Pol-Duffing oscillator  proposed by Zhao et. al. in \cite{zly}.

The dynamical behaviour of system \eqref{gvd} was investigated by Matouk and Agiza in \cite{ma} and  Zhao et. al. in \cite{zly}. They verified the existence of periodic solutions, gave the stability conditions for the equilibrium points and  investigated  the presence of chaos and hidden attractors. These papers used a combination of analytical and numerical  methods for obtaining their results.

Our approach is focused in the analytic detection of multiple invariant sets bifurcating from the origin of system \eqref{gvd}. In our main result, three periodic orbits and two invariant tori are detected bifurcating simultaneously from the origin of system \eqref{gvd}. As far as we know, this is the first time that this kind of phenomena has  been reported for system \eqref{gvd}.  The technique used in this paper is a natural combination of the averaging theory and Lyapunov coefficient method  that was recently developed  in \cite{ma}.

Take $\omega>0$, it can be easily  verified  that there are five parameter families  such that the origin of system \eqref{gvd} is a zero-Hopf equilibrium point. Namely, the parameter families are

\begin{itemize}
\item[$i$)] $h=0$, $k=-\dfrac{\mu ^2 \nu ^2+\omega ^2}{\nu}$ and $\al= \mu  \nu$;
\item[$ii)$] $\beta= 0$, $k= -\dfrac{\mu ^2 \nu ^2+\omega ^2}{\nu }$, and $\al=\mu   \nu$;
\item[$iii)$] $\alpha = 0$, $\beta = \dfrac{k \nu +\omega ^2}{h}$ and $\mu = 0$;
\item[$iv)$]$\al= 0$, $\beta=\frac{\omega ^2}{h}$ and $\nu= 0$;
\item[$v)$] $\al=0$, $k=-\dfrac{\omega ^2}{\nu }$, $ h=0$  and $\mu=0$.
\end{itemize}

In order to study system \eqref{gvd} with coefficients that are $\e$-close to the families presented above we are going to assume hereafter that the coefficients of system \eqref{gvd} depend smoothly on $\e$, accordingly
\begin{equation}\label{coefs}
\begin{aligned}
h(\e)&= h_0+\sum_{i=1}^k\e^i h_i+\CO(\e^{k+1}), &
k(\e)&=k_0+\sum_{i=1}^k\e^i k_i+\CO(\e^{k+1}), \\
\alpha(\e)&=\al_0+\sum_{i=1}^k\e^i \alpha_i+\CO(\e^{k+1}), &
\beta(\e)&=\beta_0+\sum_{i=1}^k\e^i \beta_i+\CO(\e^{k+1}),\\
\mu(\e)&=\mu_0+\sum_{i=1}^k\e^i \mu_i+\CO(\e^{k+1}), &
\nu(\e)&=\nu_0+\sum_{i=1}^k\e^i \nu_i+\CO(\e^{k+1}).
\end{aligned}
\end{equation}
Our first result will be on the coexistence of multiple tori and periodic solutions in system \eqref{gvd}. In this case, three periodic solutions will coexists if  the quantities
\begin{equation}\label{param}
\begin{aligned}
&\delta_a=k_0 \left(2 k_0 \mu_1 \nu_0^2 -\al_1 \omega ^2\right) , & &\delta_b=k_0 \left(2 \al_1 \omega ^2+k_0 \mu_1 \nu_0^2\right),\\
&\delta_c=k_0 \left(\al_1 \omega ^2+k_0 \mu_1 \nu_0^2\right), & &\delta_d=\al_1\left(k_0 \nu_0+\omega ^2\right),
\end{aligned}
\end{equation}
are simultaneously positive. In this case, two periodic solutions will be symmetric with respect to the central one. The stability of the central periodic solution will be determined by the sign of the following values
\begin{align*}
\lambda_1 &=\frac{\left(k_0 \nu_0 +\omega ^2\right) \left(2 \al_1 \omega ^2+5 \al_1 k_0 \nu_0 +k \mu_1 \nu_0 ^2\right)}{k_0 \nu_0  \omega ^3} ,\\
\lambda_2  &= -\frac{8 \al_1 \omega ^2+15 \al_1 k_0 \nu_0 +2 k_0 \mu_1 \nu_0 ^2}{4 \omega ^3}.
\end{align*}
The other two symmetric periodic solutions will have their stability determined by the following coefficients
\begin{equation*}
\begin{split}
A & =
\frac{\left(k_0 \nu_0 +2 \omega ^2\right) \left(4 \al_1 \omega ^2+k \nu_0  (5 \al_1+2 \mu_1 \nu_0 )\right)}{8 k_0 \nu_0  \omega ^3}, \\
B & =\dfrac{1}{\al_1^2 k_0^2 \nu_0 ^2}\Big(64 \al_1^2 \omega ^8+k_0^4 \nu_0 ^4 \left(145 \al_1^2+276 \al_1 \mu_1 \nu_0 +36 \mu_1^2 \nu_0 ^2\right)\\
& +4 k^3 \nu_0 ^3 \omega ^2 \left(153 \al_1^2+136 \al_1 \mu_1 \nu_0 +12 \mu_1^2 \nu_0 ^2\right)\\
& +4 k^2 \nu_0 ^2 \omega ^4 \left(221 \al_1^2+84 \al_1 \mu_1 \nu_0 +4 \mu_1^2 \nu_0 ^2\right)+32 \al_1 k_0 \nu_0  \omega ^6 (15 \al_1+2 \mu_1 \nu_0 )\Big).
\end{split}
\end{equation*}
When $A$ passes through zero these periodic solutions will change their stability and a Neimark-Sacker bifurcation may occur around each symmetric periodic solution. In this case, two invariant tori will emerge. In this situation the bifurcation parameters are
\begin{equation*}
\begin{split}
\ell_{1,1}&=-\frac{ h_0^2 \nu_0  \left(5 k_0^2 \nu_0 ^2+16 k_0 \nu_0  \omega ^2+8 \omega ^4\right)}{ \left(k_0 \nu_0 +\omega ^2\right)},\\
 \widehat{\mu}_1&=-\frac{\al_1 \left(5 k_0 \nu +4 \omega ^2\right)}{2 k_0 \nu_0 ^2}.
\end{split}
\end{equation*}
Our first main result can be stated as follows.
\begin{theorem}\label{tvd1}
Consider system \eqref{gvd} with the parameters \eqref{coefs} satisfying the following relation
$$
\alpha_0 = 0, \quad \beta_0 = \dfrac{k_0 \nu_0 +\omega ^2}{h_0} \quad \mbox{and} \quad \mu_0 = 0.
$$
Then, the following statements hold:
\begin{itemize}
\item[a)] If $\delta_a>0,$ $\delta_b>0,$ $\delta_c>0$ and $\delta_d>0,$  for $\e>0$ sufficiently small, system \eqref{gvd} will have three periodic solutions $\varphi_0(t,\e),$  $\varphi_+(t,\e)$ and $\varphi_-(t,\e)$ bifurcating from the origin.

\item[b)] If $\lambda_1<0$ and $\lambda_2<0$, then the periodic solution $\varphi_0(t,\e)$ will be asymptotic stable. On the other hand, if $\lambda_1>0$, or $\lambda_2>0$ the periodic solution $\varphi_0(t,\e)$ will be unstable. Analogously, we define
$$ \lambda_\pm=A\pm \dfrac{\al_1}{8\omega^3}\sqrt{B}.$$
If $\Re(\lambda_+)<0$ and $\Re(\lambda_-)<0$, then both periodic solutions $\varphi_\pm(t,\e)$ will be stable. In case that $\Re(\lambda_+)>0$, or $\Re(\lambda_-)>0$ both periodic solutions will be unstable.

\item[c)] If $\ell_{1,1}\neq 0$, then there exists a smooth curve $\widehat{\mu}(\e)$, satisfying $\widehat{\mu}(\e)=\e \widehat{\mu}_1+\CO(\e^2)$, such that each of the periodic orbits $\varphi_\pm(t,\e)$ bifurcates into an unique invariant torus whenever $\ell_{1,1}(\mu_1-\widehat{\mu}(\e))<0$. Moreover, if $\ell_{1,1}>0$ (resp. $\ell_{1,1}<0$) these tori are unstable (resp. asymptotically stable), whereas the periodic orbits $\varphi_\pm(t,\e)$ are asymptotically stable (resp. unstable).
\end{itemize}
\end{theorem}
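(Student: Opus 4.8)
The plan is to realize all the objects in the statement as bifurcations produced by the averaging method, after bringing \eqref{gvd} to the appropriate form. Under the three relations $\alpha_0=\mu_0=0$ and $\beta_0 h_0=k_0\nu_0+\omega^2$ the linearization of \eqref{gvd} at the origin has eigenvalues $0$ and $\pm i\omega$, so one first applies the linear change of coordinates sending this linear part to its real normal form, with an $(x,y)$--block of rotation speed $\omega$ and a neutral $z$--direction. Since the relations force $\alpha(\e)=\CO(\e)$ and $\mu(\e)=\CO(\e)$, rescaling the variables by $\sqrt{\e}$ (i.e.\ looking for solutions of amplitude $\CO(\sqrt\e)$) makes the cubic nonlinearity and the $\alpha_1,\mu_1$--terms enter the transformed system simultaneously at order $\e$, whereas $h_1,k_1,\nu_1,\beta_1$ only perturb the angular equation and off--diagonal terms. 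Passing to cylindrical coordinates $(\rho,\theta,w)$ and using $\theta$ as the independent variable one obtains a $2\pi$--periodic system $\rho'=\e R(\rho,w,\theta)+\CO(\e^{2})$, $w'=\e W(\rho,w,\theta)+\CO(\e^{2})$, to which the first--order averaging theorem applies.

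For parts (a) and (b) I would compute the averaged functions $\overline R(\rho,w)$, $\overline W(\rho,w)$. Because \eqref{gvd} is an odd vector field, the averaged planar system is invariant under $(\rho,w)\mapsto(\rho,-w)$, hence $\overline W$ is divisible by $w$ and both $\{w=0\}$ and $\{\rho=0\}$ are invariant. Its zeros with $\rho>0$ are then a single point $(\rho_0,0)$ with $\rho_0^{2}>0$ iff $\delta_c>0$, and a symmetric pair $(\rho_+,\pm w_+)$ obtained by solving two linear equations in $(\rho^{2},w^{2})$, whose solutions satisfy $\rho_+^{2}>0\iff\delta_a>0$ and $w_+^{2}>0\iff\delta_b>0$; the remaining inequality $\delta_d>0$ is the non--degeneracy condition keeping these zeros simple and the subsequent bifurcations generic. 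The averaging theorem then yields the three periodic solutions $\varphi_0,\varphi_\pm$ of \eqref{gvd} bifurcating from the origin with the stated symmetry, which is (a); and it gives that the stability type of each $\varphi$ equals that of the corresponding zero, read off from the $2\times2$ Jacobian of $(\overline R,\overline W)$ there. At $(\rho_0,0)$ this Jacobian is diagonal, with eigenvalues whose signs are those of $\lambda_1$ and $\lambda_2$; at $(\rho_+,\pm w_+)$ its trace and determinant give the eigenvalues $A\pm\frac{\alpha_1}{8\omega^{3}}\sqrt{B}=\lambda_\pm$. This is precisely statement (b).

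For part (c), note that when $B<0$ the eigenvalues $\lambda_\pm$ at $(\rho_+,\pm w_+)$ are complex conjugate with common real part $A$, and from the expression for $A$ one checks that $A$ vanishes exactly at $\mu_1=\widehat\mu_1$, with $\partial A/\partial\mu_1\neq0$ there; hence the averaged planar system undergoes a transversal Hopf bifurcation at each of the two equilibria $(\rho_+,\pm w_+)$. Reducing this planar Hopf to its standard normal form and evaluating the first Lyapunov coefficient reproduces $\ell_{1,1}$ (up to a positive factor, and indeed independent of $\alpha_1,\mu_1$). It then remains to transfer this to \eqref{gvd}: invoking the result from the averaging machinery recalled in the introduction, which turns a transversal Hopf bifurcation of the first averaged system into a Neimark--Sacker bifurcation of the original system, each of $\varphi_\pm$ bifurcates into a unique invariant torus, asymptotically stable in the supercritical case $\ell_{1,1}<0$ and unstable in the subcritical case $\ell_{1,1}>0$, with $\varphi_\pm$ exchanging its stability; the torus is present exactly when $\ell_{1,1}(\mu_1-\widehat\mu(\e))<0$. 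The critical curve $\widehat\mu(\e)=\e\,\widehat\mu_1+\CO(\e^{2})$ is obtained by applying the implicit function theorem to the equation expressing that the nontrivial Floquet multiplier of $\varphi_\pm$ has modulus one (equivalently, the vanishing of the corresponding trace in the higher--order averaged expansion), whose leading--order root is $\widehat\mu_1$.

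The routine steps are the two symbolic computations: the averaged functions together with their Jacobians, and the normal--form reduction producing $\ell_{1,1}$; both are lengthy and require careful bookkeeping but present no conceptual difficulty. The genuinely delicate point is the last one: quoting and applying, with all hypotheses verified, the theorem that upgrades the Hopf bifurcation of the merely $C^{1}$--close averaged system to an honest invariant torus of \eqref{gvd}, together with the transversality and non--degeneracy checks ($\partial A/\partial\mu_1\neq0$, $\ell_{1,1}\neq0$, and simplicity of the averaged zeros) that make the Neimark--Sacker theorem applicable.
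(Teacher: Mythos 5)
Your proposal follows essentially the same route as the paper: a linear change to the real Jordan normal form, an amplitude rescaling, passage to cylindrical coordinates with $\theta$ as the new time, first-order averaging whose three simple zeros $(r_0,0)$ and $(r_1,\pm z_1)$ yield $\varphi_0,\varphi_\pm$ with stability read off from the Jacobian eigenvalues $\lambda_1,\lambda_2$ and $\lambda_\pm$, and finally the torus-bifurcation theorem of C\^andido--Novaes applied to the averaged planar system at the symmetric zeros after checking the eigenvalue crossing, the transversality $\partial A/\partial\mu_1\neq0$, and $\ell_1\neq0$. (Incidentally, your $\sqrt{\e}$ rescaling is the one consistent with the paper's displayed order-$\e$ equations, even though the paper writes the rescaling as $\e$.)
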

The proof of Theorem \ref{tvd1} will be provided in Section \ref{pr} (see Proposition \ref{p1} and \ref{p5}). The next result provides information about the bifurcation of periodic solutions associated with the parametric families $(i),$ $(ii),$ $(iv)$ and $(v)$.  We introduce the following assumptions.
\begin{itemize}
\item[{\bf H$_1$}.] {\it If $ \delta_1=\omega ^2 (\mu_0 \nu_1-\al_1+\mu_1 \nu_0)-\beta_0 h_1 \mu_0 \nu_0$, then the coefficients \eqref{coefs} of system \eqref{gvd} satisfy
$$
h_0=0, \quad k_0=-\dfrac{\mu_0^2 \nu_0^2+\omega ^2}{\nu_0}, \quad \al_0=\mu_0\nu_0,
$$
where  $\delta_1 \nu_0>0$ and $\beta_1h_1\mu_0\neq 0$.}

\item[{\bf H$_2$}.] { \it If $\delta_2=\omega ^2 (\mu_0 \nu_1-\al_1+\mu_1 \nu_0)-\beta_1 h_0 \mu_0 \nu_0$, then the coefficients \eqref{coefs} of system \eqref{gvd} satisfy
$$
\beta_0=0, \quad k_0=-\dfrac{\mu_0^2\nu_0^2+\omega^2}{\nu_0}, \quad \al_0=\mu_0\nu_0.
$$
Moreover, $\al_1=0$, $\al_2=0$, $\beta_0=\dfrac{\omega^2}{h_0}$ and $\mu_0=0$ where $\delta_2 \nu_0>0$ and $\beta_1h_0\mu_0\neq 0$.}

\item[{\bf H$_3$}.] {\it If $\delta_3=\sqrt{\mu_1}  (h_1 k_0-h_0 k_1)h_0 \omega$, then the coefficients \eqref{coefs} of system \eqref{gvd} satisfy
$$
\al_0=0,\quad  \beta_0=\dfrac{\omega^2}{h_0}, \quad \nu_0=0.
$$
Moreover, $\al_1=0,$  $ \mu_0=0$, $\al_2=0$, where $\delta_3>0$ and $\left(2 k_0 \mu_1 \nu_1^2-\al_3 \omega ^2\right)\neq 0$.}

\item[{\bf H$_4$}.]{ \it If $ \delta_4=\nu_0( \mu_1 \nu_0- \al_1)$, then the coefficients \eqref{coefs} of system \eqref{gvd} satisfy
$$
\al_0=0, \quad k_0=-\dfrac{\omega^2}{\nu_0}, \quad h_0=0, \quad \mu_0=0,
$$
where $\delta_4>0$ and  $ \beta_0 h_1 \left(2 \al_1^2-3 \al_1 \mu_1 \nu_0+\mu_1^2 \nu_0^2\right)\neq 0$.}
\end{itemize}

Our second main result is stated as follows.

\begin{theorem}\label{tvd2}
 Assume that system \eqref{gvd} with coefficients \eqref{coefs} satisfies one of the hypothesis {\bf H$_1$}-{\bf H$_4$}. Then for $\e>0$ sufficiently small, system \eqref{gvd} will have a periodic solution emerging from the origin of coordinates.
\end{theorem}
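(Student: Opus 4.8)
Here is the strategy I would follow. The hypotheses {\bf H$_1$}, {\bf H$_2$}, {\bf H$_3$}, {\bf H$_4$} correspond respectively to the parameter families $(i)$, $(ii)$, $(iv)$, $(v)$, so I would treat the four cases separately but along the same lines, applying the averaging method to a suitably normalized and rescaled version of \eqref{gvd}. The first step is a linear normalization: in each of these four cases the hypotheses are exactly the conditions making the linear part of \eqref{gvd} at the origin, evaluated at $\e=0$, have eigenvalues $0$ and $\pm i\omega$. Hence there is an invertible linear change of variables $(x,y,z)\mapsto(u,v,w)$, with coefficients depending only on the unperturbed parameters, conjugating this linear part to
$$
\begin{pmatrix} 0 & -\omega & 0\\ \omega & 0 & 0\\ 0 & 0 & 0\end{pmatrix}.
$$
Since the origin is an equilibrium of \eqref{gvd} for every $\e$ (there is no constant term), in the new variables the system becomes $\dot{\mathbf u}=L\mathbf u+\sum_{j\ge 1}\e^{j}G_j(\mathbf u)$, where $L$ is the matrix above and the $G_j$ absorb both the $\e$-corrections \eqref{coefs} of the parameters and the cubic term $-\nu x^{3}$.

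Next I would rescale the phase variables, $\mathbf u=\e^{\,p}\mathbf U$, choosing the exponent $p$ (which need not be the same in all four cases — notably when $\nu_0=0$, as in {\bf H$_3$}, the cubic term is already $O(\e)$ and the balance changes) so that the first parameter correction and the lowest-order relevant nonlinearity enter at the same order. After dividing by an appropriate power of $\e$, writing $(U_1,U_2)=(\rho\cos\T,\rho\sin\T)$ in polar coordinates and taking $\T$ as the new independent variable, one is left with a $2\pi$-periodic non-autonomous system
$$
\frac{d}{d\T}(\rho,W)=\sum_{j\ge 1}\e^{j}F_j(\T,\rho,W),
$$
which is in the standard form required by the averaging method.

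I would then compute the averaged functions $f_1,f_2,\dots$ by the usual recursive (Bell-polynomial) formulas until reaching the first one, say $f_n$, that is not identically zero, and solve $f_n(\rho,W)=0$. A direct computation should show that under each of {\bf H$_1$}--{\bf H$_4$} this equation has a solution $(\rho^{*},W^{*})$ with $\rho^{*}>0$, and that the sign and non-degeneracy conditions built into the hypotheses — $\delta_1\nu_0>0$ and $\beta_1h_1\mu_0\neq0$ in {\bf H$_1$}, and the analogous conditions $\delta_2\nu_0>0$, $\delta_3>0$ with $2k_0\mu_1\nu_1^2-\al_3\omega^2\neq0$, $\delta_4>0$ with $\beta_0h_1(2\al_1^2-3\al_1\mu_1\nu_0+\mu_1^2\nu_0^2)\neq0$ in the remaining cases — are precisely what force $\rho^{*}>0$ and make the Jacobian of $f_n$ at $(\rho^{*},W^{*})$ nonsingular. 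By the averaging theorem this simple zero persists for $\e>0$ small and yields a $2\pi$-periodic solution of the $(\rho,W)$-system; undoing the rescaling of the previous step, whose amplitude is $O(\e^{p})\to0$ as $\e\to0$, produces a periodic solution of \eqref{gvd} emerging from the origin. The precise case analysis will be carried out in Section \ref{pr}.

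The conceptual skeleton above is routine; the real obstacle is the symbolic computation at the last step. In several of the four cases the first-order averaged function vanishes identically — this is already signalled by the square root $\sqrt{\mu_1}$ in $\delta_3$, by the factor $\nu_1^{2}$, and by the appearance of $\al_2$ and $\al_3$ and the extra requirements $\al_1=0$, $\al_2=0$, $\mu_0=0$ — so the averaging must be pushed to second or even third order, which forces one to carry along all the higher coefficients in \eqref{coefs} and to manipulate lengthy expressions. Keeping these computations organized and verifying, case by case, that the stated non-degeneracy hypotheses really do make the relevant zero simple is where the care is needed; everything else is standard.
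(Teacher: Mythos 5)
Your setup (linear normalization to real Jordan form, $\e$-dependent rescaling of the phase variables with a case-dependent exponent, passage to cylindrical coordinates with $\T$ as new time, and then averaging) matches the paper's strategy, and for the cases {\bf H$_1$} and {\bf H$_2$} your plan is exactly what the paper does: the first-order averaged function has a simple zero with $\rho^*>0$ and Theorem \ref{CAT} applies. However, there is a genuine gap in the remaining two cases. Your plan is to locate the first averaged function that is not identically zero and find a \emph{simple} zero of it; you anticipate only the difficulty that one may have to go to higher order because earlier averaged functions vanish identically. The actual obstruction in cases {\bf H$_3$} and {\bf H$_4$} is different and worse: the first non-vanishing averaged function has \emph{non-isolated} zeros. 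Under {\bf H$_3$}, $\bg_1\equiv0$ and $\bg_2$ vanishes on three whole curves $\CZ_0$ and $\CZ_\pm$ (its first component is identically zero); under {\bf H$_4$}, $\bg_1$ is not identically zero but its second component is, so it vanishes on the continuum $\CZ_+=\{(2\sqrt{(\mu_1\nu_0-\al_1)/(3\nu_0)},z):z\in\R\}$. In both situations every zero is degenerate, the Jacobian is singular along the entire zero set, and Theorem \ref{CAT} can never be applied no matter how many orders of averaging you compute. Simply "pushing the averaging to second or third order" does not resolve this, because the degeneracy is transverse to the order of the expansion.

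What the paper does instead is: for {\bf H$_3$}, apply the Lyapunov--Schmidt-reduction version of averaging (Theorem \ref{LSt1}), which requires computing $\bg_3$ and $\bg_4$ and assembling the bifurcation functions $f_1$, $f_2$ restricted to the branch $\CZ_+$, then verifying the quantitative lower bound in hypothesis $(iv)$ of that theorem; for {\bf H$_4$}, perform a direct analysis of the Poincar\'e map, composing the displacement map with a blow-up $\xi((r,z),\e)=(2\sqrt{(\mu_1\nu_0-\al_1)/(3\nu_0)},z)+\e(r,0)$ along the continuum and invoking the implicit function theorem on the resulting leading coefficient $H_1$. Your proposal is missing both of these ideas, and without one of them (or an equivalent degenerate-zero technique) the argument cannot be completed for {\bf H$_3$} and {\bf H$_4$}. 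The non-degeneracy conditions you cite, such as $2k_0\mu_1\nu_1^2-\al_3\omega^2\neq0$ and $\beta_0 h_1(2\al_1^2-3\al_1\mu_1\nu_0+\mu_1^2\nu_0^2)\neq0$, are in fact the simplicity conditions for the zeros of these \emph{reduced} bifurcation functions, not of the averaged functions themselves, which is a signal that the reduction step is indispensable.
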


The next section will be dedicated to the averaging method that will be used along the paper for obtaining our results. The proofs of Theorems \ref{tvd1} and \ref{tvd2} will be presented in Section \ref{pr}. Finally, in Section \ref{sec:ex}, for each of the results stated in Theorems \ref{tvd1} and \ref{tvd2} we present a numerical example.

\section{Averaging Theory: Periodic Solutions and Invariant Tori}\label{av}

In this work we use two aspects of the averaging theory. The first aspect of the theory is the detection of periodic solutions. In this context we will use  classical averaging theory for detecting periodic solutions. Moreover,  a new version of averaging theory combined with Lyapunov-Schmidt reduction will be used for finding periodic solutions that are undetectable by the classical method \cite{can17} (see Proposition \ref{p4}).  The second aspect of the averaging theory is the detection of invariant tori. The result we will used for this task is also a new feature of averaging theory combined with the analysis of Lyapunov constants, see \cite{can17}. 

This section is dedicated to present all results that will be used for proving Theorems \eqref{tvd1} and \eqref{tvd2}.

\subsection{Existence of Periodic Solutions}
Consider the non-autonomous differential systems written in the following {\it standard form}:
\begin{equation}\label{tm13}
\dot {\bf x}=\sum_{i=1}^k \e^i {\bf F}_i(t,{\bf
x})
+\e^{k+1} \widetilde{{\bf
F}}(t,{\bf x},\e), \quad  (t,{\bf x},\e)\in\R\times\Omega\times(-\e_0,\e_0),
\end{equation}
where $\Omega$ is an open bounded subset of $\mathbb{R}^n$ and  $\e_0$ is a small positive real number. It is assumed that $\bF_i,$ $i=1,\ldots,k,$ and $\widetilde\bF$ are sufficiently smooth functions and $T$-periodic in the variable $t.$ The periodicity of system \eqref{tm13} allow us to see it as defined in the cylinder $(t,\bx) \in \s^1\times \Omega,$ where $\s^1\equiv \R/T\mathbb{Z}.$

From the qualitative theory of ordinary differential equations, the solution $\bx(t,\bz,\e)$ of \eqref{tm13}, satisfying $\bx(0,\bz,\e)=\bz$, can be written as
\begin{equation*}\label{tri}
\bx(t,\bz,\e)=\bz+\sum_{i=1}^k\e^i \dfrac{\by_i(t,\bz)}{i!}+\CO(\e^{k+1}),
\end{equation*}
where the expressions for $\by_i$ are obtained by solving recursively the system of equations obtained from \eqref{tm13} (see \cite[Lemma 5]{can17}).
Hence, the correspondent Poincar\'{e} map $\Pi(\bz,\e)=\bx(T,\bz,\e)$ of system \eqref{tm13} can be written as
\begin{equation}\label{quad}
\Pi(\bz,\e)=\bz+\sum_{i=1}^k\e^i\bg_i(\bz) +\CO(\e^{k+1}).
\end{equation}

It is well know, again from the qualitative theory of ordinary differential equations, that a fixed point of the Poincar\'e map \eqref{quad}, that is, a point satisfying $\Pi(\bz(\e),\e)=\bz(\e)$, corresponds to a branch of isolated $T$-periodic solutions of system \eqref{tm13}. The averaging method is based on using the coefficient functions of the Poincar\'e map \eqref{quad} in order to obtain fixed points for it.

We define by
\begin{equation}\label{avf}
\bg_i(\bz)=\dfrac{\by_i(T,\bz)}{i!}
\end{equation}
the \textit{averaged functions of order $i$}  of system \eqref{tm13}. If for some $m\in\{1,2,\ldots,k\}$ we have that $\bg_0=  \cdots= \bg_{m-1}= 0$ and $\bg_m\neq0$, then a simple zero of $\bg_m(\bz)$ provides a branch of fixed points $\bz(\e)$ for the map \eqref{quad}.

 In fact, with this terminology the classical averaging method for finding periodic solutions can be summarized by the following theorem.
\begin{theorem}[\cite{DLT}]\label{CAT}
Assume that, for some $m\in\{1,\ldots,k\}$, $\bg_0=\cdots\bg_{m-1}=0$ and $\bg_m\neq0$. If there exists $\bz^*\in \Omega$
such that $\bg_m(\bz^*)=0$ and $| D\bg_m(\bz^*)|\neq 0$, then for $|\e|\neq0$ sufficiently small there exists an isolated $T$-periodic solution
$\f(t,\e)$ of system \eqref{tm13} such that $\f(0,0)=\bz^*$.
\end{theorem}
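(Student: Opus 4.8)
The plan is to turn the problem into a fixed-point equation for the Poincaré map and then invoke the implicit function theorem. As recalled above, a $T$-periodic solution of \eqref{tm13} issuing from a point $\bz$ corresponds exactly to a fixed point of the Poincaré map $\Pi(\bz,\e)=\bx(T,\bz,\e)$, which by \eqref{quad} has the expansion $\Pi(\bz,\e)=\bz+\sum_{i=1}^k\e^i\bg_i(\bz)+\CO(\e^{k+1})$ with the $\bg_i$ given by \eqref{avf}. For $\e$ small the map $\Pi$ is well defined on a neighbourhood of $\bz^*$, since at $\e=0$ the right-hand side of \eqref{tm13} vanishes and solutions are constant, so continuous dependence keeps trajectories in $\Omega$ up to time $T$. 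Hence it suffices to produce, for each small $\e\neq0$, a point $\bz(\e)$ near $\bz^*$ solving the displacement equation $\Delta(\bz,\e):=\Pi(\bz,\e)-\bz=0$, with $\bz(\e)\to\bz^*$ as $\e\to0$.

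First I would use the hypothesis $\bg_0=\cdots=\bg_{m-1}=0$ to factor out the appropriate power of $\e$ from the displacement. Since the $\by_i$, and hence the $\bg_i$, are smooth, and the $\CO(\e^{k+1})$ term in \eqref{quad} is itself a smooth function of $(\bz,\e)$ vanishing to order $k+1$ in $\e$ (being assembled from the smooth data of \eqref{tm13} through the recursion of \cite[Lemma 5]{can17}), one may write $\Delta(\bz,\e)=\e^m\,\widehat{G}(\bz,\e)$, where $\widehat{G}(\bz,\e)=\bg_m(\bz)+\e\,\bg_{m+1}(\bz)+\cdots+\e^{k-m}\bg_k(\bz)+\CO(\e^{k-m+1})$ extends to a $C^1$ map (indeed, as regular as the data permits) on $\Omega\times(-\e_0,\e_0)$ after possibly shrinking $\e_0$. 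In particular $\widehat{G}(\bz,0)=\bg_m(\bz)$ and $D_{\bz}\widehat{G}(\bz,0)=D\bg_m(\bz)$.

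Next I would apply the implicit function theorem to $\widehat{G}$ at the point $(\bz^*,0)$. By assumption $\widehat{G}(\bz^*,0)=\bg_m(\bz^*)=0$, and the Jacobian $D_{\bz}\widehat{G}(\bz^*,0)=D\bg_m(\bz^*)$ is invertible because $|D\bg_m(\bz^*)|\neq0$. Therefore there exist a neighbourhood $U\subset\Omega$ of $\bz^*$, a number $\e_1\in(0,\e_0]$, and a $C^1$ curve $\e\mapsto\bz(\e)\in U$, defined for $|\e|<\e_1$, with $\bz(0)=\bz^*$, $\widehat{G}(\bz(\e),\e)=0$, and such that $\bz(\e)$ is the only zero of $\widehat{G}(\cdot,\e)$ in $U$. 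For $\e\neq0$ this yields $\Delta(\bz(\e),\e)=\e^m\widehat{G}(\bz(\e),\e)=0$, so $\bz(\e)$ is a fixed point of $\Pi(\cdot,\e)$, and $\f(t,\e):=\bx(t,\bz(\e),\e)$ is a $T$-periodic solution of \eqref{tm13} with $\f(0,0)=\bx(0,\bz^*,0)=\bz^*$. Its isolation among $T$-periodic solutions with initial value in $U$ is precisely the uniqueness clause of the implicit function theorem.

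The one genuinely delicate point is the factoring step: one must know that dividing the displacement by $\e^m$ still leaves a map that is $C^1$ \emph{jointly in $\bz$ and $\e$ at $\e=0$}, so that the implicit function theorem applies at the value $\e=0$ and not merely for $\e\neq0$. This is where the precise meaning of the remainder in \eqref{quad} matters — it is not just $O(\e^{k+1})$ in magnitude but a smooth function of $\e$ that is flat to order $k+1$, equivalently a Taylor-with-remainder expansion of $\Pi$ in $\e$ whose remainder, together with its $\bz$-derivative, stays $O(\e^{k+1-m})$ after the division. Everything else — the periodic-solution/fixed-point dictionary and the concluding implicit-function argument — is routine.
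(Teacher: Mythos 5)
Your proof is correct, but it is not the argument behind the paper's version of this statement: the paper gives no proof at all and simply imports the theorem from \cite{DLT}, where the result is established via the \emph{Brouwer degree} (as that paper's title indicates), under the weaker topological hypothesis that the degree of $\bg_m$ at $\bz^*$ is nonzero and with lower regularity demands on the $\bF_i$. Your route---factor $\e^m$ out of the displacement map $\Pi(\bz,\e)-\bz$ using Hadamard/Taylor, then apply the implicit function theorem to the reduced map $\widehat{G}$ at $(\bz^*,0)$---is a genuinely different and perfectly valid proof of the statement \emph{as written here}, since the hypothesis $|D\bg_m(\bz^*)|\neq0$ is exactly the nondegeneracy the implicit function theorem needs (and implies local degree $\pm1$, so your version is a special case of the degree-theoretic one). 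What you lose is the extra generality of \cite{DLT}; what you gain is stronger output: a $C^1$ branch $\e\mapsto\bz(\e)$ with $\bz(0)=\bz^*$ and local uniqueness of the fixed point, which is in fact what the paper implicitly relies on later (e.g.\ the expansion $\bz(\e)=\bz_0+\e\bz_1+\e^2\bz_2+\CO(\e^3)$ in Remark \ref{rem1} and the stability analysis via the eigenvalues of $M(\e)$ presuppose such a smooth branch, not merely existence by degree). You also correctly isolate the one delicate point, namely that the $\CO(\e^{k+1})$ remainder in \eqref{quad} is a smooth Taylor remainder rather than a mere bound, so that $\widehat{G}$ stays $C^1$ jointly in $(\bz,\e)$ at $\e=0$; this follows from the smooth dependence of solutions of \eqref{tm13} on $(\bz,\e)$ and the construction of the $\by_i$ in \cite[Lemma 5]{can17}.
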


Now we present the expressions of the functions $\by_i,$ for $i=1,\dots,4$ that will be needed in this work. For the general expressions, the reader is addressed to \cite{can17}. Consider the vector $\by=(y^1,\dots, y^n)\in\R^n$. We denote $\by^m=\big(\by, \cdots, \by \big)\in \R^{m n}$. Then
\begin{equation}\label{avf}
\bg_i(\bz)=\dfrac{\by_i(T,\bz)}{i!},
\end{equation}
where
\begin{align*}
\by_1(t,\bz)=&\int_0^t \bF_1(\tau,\bz)\mathrm{d}\tau,\\
\by_2(t,\bz)=&\int_0^t
2\bF_2(\tau,\bz)+2\dfrac{\p
\bF_1}{\p
\bx}(\tau,\bz)\by_1(\tau,\bz) \mathrm{d}\tau,\\
\by_3(t,\bz)=&\int_0^t
6\bF_3(\tau,\bz)+6\dfrac{\p
\bF_2}{\p
\bx}(\tau,\bz)\by_1(\tau,\bz)\\
&+3\dfrac{\p^2 \bF_1}{\p
\bx^2}(\tau,\bz)\by_1(\tau,\bz)^2+3\dfrac{\p \bF_1}{\p
\bx}(\tau,\bz)\by_2(\tau,\bz)\mathrm{d}\tau,\\
\by_4(t,\bz)=&\int_0^t
24\bF_4(\tau,\bz)+24\dfrac{\p \bF_3}{\p
\bx}(\tau,\bz)\by_1(\tau,\bz) +12\dfrac{\p^2 \bF_2}{\p
\bx^2}(\tau,\bz)\by_1(\tau,\bz)^2+\\
&12\dfrac{\p \bF_2}{\p
\bx}(\tau,\bz)\by_2(\tau,\bz)+12\dfrac{\p^2 \bF_1}{\p \bx^2}(\tau,\bz)\by_1(\tau,\bz)\odot
\by_2(\tau,\bz)\\
&+4\dfrac{\p^3 \bF_1}{\p
\bx^3}(\tau,\bz)\by_1(\tau,\bz)^3+4\dfrac{\p \bF_1}{\p \bx}(\tau,\bz)\by_3(\tau,\bz)\mathrm{d}\tau.
\end{align*}
The previous result cannot be used when the zero of the first non vanishing averaged function $\bg_m$ is not isolated. In this context we can use the method presented in \cite{can17} that we reproduce here.

Let $\pi:\R^m\times\R^{n-m} \rightarrow\R^m$ and
$\pi^{\perp}:\R^m\times \R^{n-m} \rightarrow\R^{n-m}$  be the
projections onto the first $m$ coordinates and onto the last $n-m$
coordinates, respectively. Denote $\bz\in \Omega$ as $\bz=(u,v)\in \R^m\times\R^{n-m}$.  Let $\ell$ be the first subindex such that $\bg_\ell\not\equiv 0$. Moreover, assume that the averaged function $\bg_\ell$ vanishes on the set
\begin{equation*}\label{grp}
\CZ=\lbrace \bz_u =(u,\mathcal{B}(u)):u \in \ov V  \rbrace\subset U,
\end{equation*}
where $m<n$ are positive integers, $V$ is an open bounded subset of $\R^m,$ and  $\mathcal{B}\colon \ov V\rightarrow \R^{n-m}$ is a  $\CC^k$ function. Thus, $\CZ$ is a set of non-isolated zeros of $\bg_\ell$ and, consequently, Theorem \ref{CAT} cannot be applied. Nevertheless, in this case we follow \cite{can17}, and the Lyapunov-Schmidt reduction can be used to obtain sufficient conditions for the existence of isolated $T$-periodic solutions bifurcating from $\CZ$ as follows. First, notice that the equation $\bz=\Pi(\bz,\e)$ is equivalent to the following system of equations
\begin{equation}\label{SE}
\left\{\begin{array}{l}
0=u-\pi \Pi(u,v,\e),\vspace{0.2cm}\\
0=v-\pi^{\perp} \Pi(u,v,\e).\\
\end{array}\right.
\end{equation}
Under convenient assumptions, the {\it implicit function theorem} can be used to find a function $\ov{\mathcal{B}}(u,\e),$ satisfying $\ov{\mathcal{B}}(u,0)=\mathcal{B}(u),$ which solves the second line of system \eqref{SE}, that is,  $\ov{\mathcal{B}}(u,\e)=\pi^{\perp} \Pi(u,\ov{\mathcal{B}}(u,\e),\e)$. Then, substituting $v=\ov{\mathcal{B}}(u,\e)$ into the first line of system \eqref{SE}, we obtain a single equation to be solved, namely $\CF(u,\e)=u- \pi \Pi(u,\ov{\mathcal{B}}(u,\e),\e)=0.$ Expanding $\CF(u,\e)$ around $\e=0,$ we get the bifurcation functions $f_i$, for $i=1,\ldots,k-1,$
\begin{align*}
\CF(u,\e)=\sum_{i=1}^{k-1} \e^i f_i(u)+\CO(\e^{k+1}),
\end{align*}
which will be given in terms of the derivatives $\gamma_j(u)=(\partial^j\ov{\mathcal{B}}/\partial \e^j)(u,0),$ $j=1,\ldots,i,$ and averaged functions $\bg_j,$ $j=1,\ldots,i+1.$ Denote $f_0=0.$ Notice that, if for some $r\in\{1,\ldots,k\}$ we have $f_0=\cdots= f_{r-1}=0$ and $f_r\neq0$, then a simple zero of $f_r(u)$ provides a branch of zeros $u(\e)$ of $\CF$, that is, $\CF(u(\e),\e)=0.$ Consequently, $\bz(\e)=\big(u(\e),\ov{\mathcal{B}}(u(\e),\e)\big)$ is a branch of solutions of system \eqref{SE}, that is, fixed points for the map \eqref{quad}. Again, $\bx(t,\bz(\e),\e)$ corresponds to a branch of isolated $T$-periodic solutions of system \eqref{tm13}. In this paper, we present the second order version of Theorem \cite[Theorem $A$]{can17}, this will be the content of Theorem \ref{LSt1}.

The next result uses some bifurcation functions $\gamma_i$ and $f_i$ for $i=1,2$ that will be defined as follows. Denote
\[
D \bg_\ell({\bf z}_u)=\begin{pmatrix}
\Lambda_u & \Gamma_u \\
B_u & \Delta_u
\end{pmatrix}
\]
where $\Lambda_u=\partial_a\pi \bg_\ell(z_u)$, $\Gamma_u=\partial_b\pi
\bg_\ell(z_u)$, $B_u=\partial_a\pi^\perp \bg_\ell(z_u)$ and
$\Delta_u=\partial_b\pi^\perp \bg_\ell(z_u)$.
The  {\it bifurcation function of order $i$} $f_i,$ for $i=1,\ldots 4,$ are defined as
\begin{equation*}
\begin{array}{rl}
\gamma_1(u)=&-\Delta_u^{-1}\pi^\perp \bg_{\ell+1}(\bz_u), \vspace{0.3cm}\\
f_1(u)=&\Gamma_u\gamma_1(u)+\pi {\bf g}_{\ell+1}({\bf z}_u),\\
\end{array}
\end{equation*}
\begin{equation*}
\begin{array}{rl}
\gamma_2(u)=&-\Delta_u^{-1}\Bigg(\dfrac{\p^2\pi^\perp
\bg_\ell}{\partial b^2}(\bz_u)\gamma_1(u)^2
+2\dfrac{\partial\pi^\perp \bg_{\ell+1}}{\partial
b}(\bz_u)\gamma_1(u)+2\pi^\perp \bg_{\ell+2}(u)
\Bigg),\vspace{0.3cm}\\
f_2(u)=&\dfrac{1}{2}\Gamma_u\gamma_2(u)+\dfrac{1}{2}\dfrac{\p^2\pi
\bg_\ell}{\p b^2}(\bz_u)\gamma_1(u)^2+\dfrac{\partial\pi
\bg_{\ell+1}}{\partial
b}(z_u)\gamma_1(u)+\pi \bg_{\ell+2}(\bz_u). \vspace{0.3cm}\\
\end{array}
\end{equation*}
Then we can define the function
\begin{equation}\label{cF}
\CF^2(\al,\e)= \e f_1(\al)+ \e^2 f_2(\al),
\end{equation}
and state the following theorem.

\begin{theorem}\label{LSt1}
Let $\Delta_{\al}$ denote the lower right corner $(n-m)\times (n-m)$
matrix of the Jacobian matrix $D\,g_i(z_{\al})$. In additional to
hypothesis (H$_a$) we assume that
\begin{itemize}
\item[$(i)$] for each $\al \in\cl(V) $, $\det(\Delta_{\al})\neq0$;

\item[$(ii)$] $f_1$ is not identically zero;

\item[$(iii)$] there exists a small parameter $\e_0>0$  such that for each
$\e\in[-\e_0,\e_0]$ there exists $a_{\e}\in V$ satisfying
$\CF^2(a_{\e},\e )=0$;

\item[$(iv)$] there exist a constant $P_0>0$ and a positive integer  $l\leq
2$ such that
\begin{equation*}
\left|\p_\al\CF^2(a_\e,\e)\cdot \al\right|\geq P_0|\e|^l |\al|, \quad
\text{for} \quad \al\in V.
\end{equation*}
\end{itemize}
Then, for $|\e|\neq0$ sufficiently small, there exists $z(\e)$ such
that $g(z(\e),\e)=0$ with
$|\pi^{\perp}z(\e)-\pi^{\perp}z_{a_{\e}}|=\CO(\e)$ and
$|\pi\,z(\e)-\pi\,z_{a_{\e}}|=\CO(\e^{k+1-l}).$
\end{theorem}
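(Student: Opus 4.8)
The plan is to carry out the Lyapunov--Schmidt reduction summarised just before the statement (following the scheme of \cite{can17}) and then to solve the resulting, possibly degenerate, bifurcation equation near the approximate root $a_\e$ by a quantitative implicit-function / degree argument. Write $\bz=(u,v)$. Since $\bg_0=\cdots=\bg_{\ell-1}\equiv0$, the displacement of the Poincaré map \eqref{quad} is $\bz-\Pi(\bz,\e)=-\e^{\ell}G(\bz,\e)$ with $G(\bz,\e):=\bg_\ell(\bz)+\e\,\bg_{\ell+1}(\bz)+\cdots+\CO(\e^{k-\ell+1})$, so that for $\e\neq0$ the system \eqref{SE} is equivalent to $\pi^{\perp}G(u,v,\e)=0$ and $\pi\,G(u,v,\e)=0$. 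Hypothesis (H$_a$) says $\bg_\ell$ vanishes on $\CZ=\{z_u=(u,\mathcal B(u)):u\in\ov V\}$, hence the first equation holds at every $(z_u,0)$, and its partial derivative in $v$ there is $\partial_v\pi^{\perp}\bg_\ell(z_u)=\Delta_u$, invertible for all $u\in\cl(V)$ by hypothesis $(i)$ — with a uniform bound on $\Delta_u^{-1}$ by compactness of $\cl(V)$. The implicit function theorem then yields a $\CC^{k}$ map $\ov{\mathcal B}(u,\e)$, defined for $u$ near $\cl(V)$ and $|\e|$ small, with $\ov{\mathcal B}(u,0)=\mathcal B(u)$ and $\pi^{\perp}G(u,\ov{\mathcal B}(u,\e),\e)\equiv0$; differentiating this identity in $\e$ recovers the formulas for $\gamma_1,\gamma_2$ displayed above.

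Substituting $v=\ov{\mathcal B}(u,\e)$ into the remaining equation leaves the single reduced equation $\CF(u,\e):=\pi\,G\bigl(u,\ov{\mathcal B}(u,\e),\e\bigr)=0$. Since $\pi\,\bg_\ell(z_u)=0$ as well, $\CF(u,0)=0$, i.e. $f_0=0$, and Taylor-expanding in $\e$ (differentiating through $\ov{\mathcal B}$ by the chain rule) gives
\[
\CF(u,\e)=\e\,f_1(u)+\e^{2}f_2(u)+\CO(\e^{3})=\CF^{2}(u,\e)+\CO(\e^{3})
\]
uniformly on $\cl(V)$, with $f_1,f_2$ exactly the bifurcation functions defined before the statement; hypothesis $(ii)$ guarantees that $\CF^{2}$ of \eqref{cF} is the relevant truncation. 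I would also record two consequences for later: by $(iii)$, $\CF(a_\e,\e)=\CF^{2}(a_\e,\e)+\CO(\e^{3})=\CO(\e^{3})$, so $a_\e$ is an $\CO(\e^{3})$--approximate zero of $\CF(\cdot,\e)$; and, since $\CF^{2}=\e f_1+\e^{2}f_2$, one has $\partial_u^{2}\CF(u,\e)=\partial_u^{2}\CF^{2}(u,\e)+\CO(\e^{3})=\CO(\e)$ uniformly on $\cl(V)$.

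The heart of the proof is to upgrade $a_\e$ to an exact zero of $\CF(\cdot,\e)$. By hypothesis $(iv)$ the matrix $L_\e:=\partial_u\CF^{2}(a_\e,\e)$ is invertible with $\|L_\e^{-1}\|\le P_0^{-1}|\e|^{-l}$; together with the curvature bound $\partial_u^{2}\CF^{2}=\CO(\e)$ this makes $a_\e$ a nondegenerate zero of $\CF^{2}(\cdot,\e)$ isolated inside a ball $B(a_\e,r_\e)$ with $r_\e=C\,|\e|^{\,k+1-l}$ (here $k=2$), on whose boundary $|\CF^{2}(\cdot,\e)|$ is of order $P_0 C\,|\e|^{3}$. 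Because $\CF-\CF^{2}=\CO(\e^{3})$, for $C$ large enough and $\e$ small the straight-line homotopy $t\CF+(1-t)\CF^{2}$, $t\in[0,1]$, has no zeros on $\partial B(a_\e,r_\e)$, so $\CF(\cdot,\e)$ inherits Brouwer degree $\pm1$ on $B(a_\e,r_\e)$ and possesses a zero $u(\e)$ there with $|u(\e)-a_\e|=\CO(|\e|^{\,k+1-l})$. (Equivalently one continues $a_\e$ along the homotopy $\CF^{2}+s(\CF-\CF^{2})$, $s\in[0,1]$; the continued branch has $s$--derivative bounded by $\|L_\e^{-1}\|\cdot\CO(\e^{3})=\CO(|\e|^{\,k+1-l})$, so it never leaves $V$.)

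Finally, put $z(\e):=\bigl(u(\e),\ov{\mathcal B}(u(\e),\e)\bigr)$. By construction it solves both equations of \eqref{SE}, i.e. it is a fixed point of \eqref{quad}, so $g(z(\e),\e)=0$ and $\bx(t,z(\e),\e)$ is a $T$--periodic solution of \eqref{tm13}. Since $\pi\,z_{a_\e}=a_\e$ and $\pi^{\perp}z_{a_\e}=\mathcal B(a_\e)=\ov{\mathcal B}(a_\e,0)$, the Lipschitz continuity of $\ov{\mathcal B}$ gives $|\pi\,z(\e)-\pi\,z_{a_\e}|=|u(\e)-a_\e|=\CO(|\e|^{\,k+1-l})$ and $|\pi^{\perp}z(\e)-\pi^{\perp}z_{a_\e}|=\CO\bigl(|u(\e)-a_\e|+|\e|\bigr)=\CO(\e)$, which are the asserted estimates. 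The step I expect to be the main obstacle is the quantitative argument of the preceding paragraph: because the reduced Jacobian degenerates like $|\e|^{l}$, one must simultaneously balance the radius $r_\e$, the $\CO(\e^{3})$ consistency error of $a_\e$ furnished by $(iii)$, and the $\CO(\e)$ size of $\partial_u^{2}\CF$, and keep every constant uniform over $\cl(V)$; this is comfortable for $l=1$ but delicate in the borderline case $l=2$, and it is precisely there that the uniform-in-$\e$ lower bound of hypothesis $(iv)$ — rather than mere pointwise invertibility — becomes indispensable.
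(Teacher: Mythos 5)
You should first be aware that the paper itself contains no proof of Theorem \ref{LSt1}: it is imported as ``the second order version of \cite[Theorem A]{can17}'' and used as a black box (its notation, e.g.\ the hypothesis (H$_a$), the map $g$, and the points $z_\al$, is even left undefined here). So there is no in-paper argument to compare against, and your proposal has to be judged on its own. Its architecture is the right one and matches the source it is reconstructing: Lyapunov--Schmidt reduction of the fixed-point equation \eqref{SE} via the implicit function theorem (using $(i)$ and compactness of $\cl(V)$ to invert $\Delta_u$ uniformly), identification of the reduced equation $\CF(u,\e)=\e f_1(u)+\e^2 f_2(u)+\CO(\e^3)$, and then a quantitative continuation of the approximate zero $a_\e$ furnished by $(iii)$ using the degenerate lower bound $(iv)$. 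The final bookkeeping ($\pi^\perp z(\e)-\pi^\perp z_{a_\e}=\ov{\mathcal{B}}(u(\e),\e)-\ov{\mathcal{B}}(a_\e,0)=\CO(\e)$ once $|u(\e)-a_\e|=\CO(\e^{k+1-l})$ is known) is correct.

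The genuine gap is exactly where you suspect it, and it is not closed by your argument: the borderline case $l=2$ (for the second-order version, $l=k$). On $\partial B(a_\e,r_\e)$ with $r_\e=C|\e|^{k+1-l}=C|\e|$, the Taylor bound gives $|\CF^2(u,\e)|\geq P_0|\e|^2\,r_\e-\tfrac12\|\partial_u^2\CF^2\|\,r_\e^2$, and since $\partial_u^2\CF^2=\CO(\e)$ the subtracted term is of size $C_3C^2|\e|^3$ --- the \emph{same} order in $\e$ as the gain $P_0C|\e|^3$, but quadratic in $C$. Hence ``for $C$ large enough'' is not available: to dominate both the curvature term and the $\CO(\e^3)$ tail $|\CF-\CF^2|\leq C_1|\e|^3$ you need a constant $C$ with $P_0C-C_3C^2>C_1$, which exists only if $P_0^2>4C_1C_3$, a relation between the constant of hypothesis $(iv)$ and the (unconstrained) bounds on $\partial_u^2 f_1$ and on the remainder that is nowhere guaranteed. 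The same obstruction appears in your parenthetical continuation argument: keeping $\partial_u\CF$ invertible with inverse $\CO(|\e|^{-2})$ along the homotopy forces the radius to satisfy $r_\e\leq \tfrac{P_0}{2C_3}|\e|$, i.e.\ an \emph{upper} bound on $C$, in tension with the lower bound needed to absorb the residual. For $l=1$ the curvature term is $\CO(|\e|^5)$ and everything closes; for $l=2$ you either need an additional smallness/compatibility assumption, or a genuinely different estimate (e.g.\ a lower bound on $\CF^2$ valid on all of $V$ rather than only on the linearization at $a_\e$), and your write-up asserts rather than supplies it. You should either restrict to $l=1$, add the missing hypothesis, or reproduce the precise degree estimate of \cite{can17} for the case $l=2$.
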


\begin{remark}\label{rem1} Let $\bz(\e)$ be a fixed point of the Poincar\'e map, using the higher order averaged functions in order to obtain an expression\begin{equation}\label{zk}
\bz(\e)=\bz_0+\e \bz_{1}+\e^2\bz_{2}+\CO(\e^3).
\end{equation}
for the fixed point. Consider also the Jacobian matrix
$$
M(\e)=I_n+\sum_{i=1}^k\e^iD\bg_i(\bz(\e)) +\CO(\e^{k+1}).
$$
If all eigenvalues of $M(\e)$ are simple, we can use the Taylor expansion of the eigenvalues of $M(\e)$ in order to determine the stability of the periodic solution $\bx(T,\bz(\e),\e)$. This method was used for finding the stable periodic solutions presented in Section \ref{sec:ex}. Note that this can be done only if all eigenvalues are simple.
\end{remark}

\subsection{Bifurcation of an Invariant Torus}\label{sec:it}
In this section we consider a parameter dependence on \eqref{tm13} and take $n=2$ obtaining the following two-parameter family of non-autonomous differential systems:
\begin{equation}\label{tm1}
\dot \bx= \e \bF_1(t,\bx,\mu)+\e^2 \widetilde{\bf
F}(t,\bx,\e,\mu),
\end{equation}
where $\Omega$ is an open bounded subset of $\mathbb{R}^2,$ $J$ is a open interval, and $\e_0$ is a small positive real number.  Notice that system \eqref{tm1} is written in the standard form \eqref{tm13} of the averaging theory with an additional  distinguished parameter $\mu$. It has been provided generic conditions on the averaged functions \eqref{avf} guaranteeing the existence of a codimension-one bifurcation curve $\mu(\varepsilon)$ in the parameter space $(\mu,\varepsilon)$ characterized by the birth of an invariant torus from a periodic solution of \eqref{tm1} in $\s^1\times \Omega$. In this section, we first introduce the main result obtained in \cite{ITCanNov2018}, and then we apply it to conclude the proof of Theorem \ref{tvd1}.

The strategy followed by \cite{ITCanNov2018} consisted in looking for conditions that ensure a {\it Neimark-Sacker Bifurcation} in the Poincar\'{e} map $\Pi(\bz, \e,\mu)=\bx(T,\bz,\e,\mu),$ $\bz\in\Sigma=\{t=0\},$ of system \eqref{tm1}. Here, $\bx(t,\bz,\e,\mu)$ denotes the solution of \eqref{tm1} satisfying $\bx(0,\bz,\e,\mu)=\bz.$ In discrete dynamical system theory, this bifurcation is characterized by the birth of an invariant closed curve from a fixed point, as the fixed point changes stability. As it well known,
an invariant torus corresponds to an invariant closed curve $\Gamma\subset\Sigma$ of $\Pi(\bz,\e,\mu)$, that is, $\Pi(\Gamma,\e,\mu)=\Gamma.$

Recall that the first-order averaged function can be written as
$$
\bg_1(\bx,\mu)=\big(\bg_1^1(\bx,\mu),\bg_1^2(\bx,\mu)\big)=\int_0^T\bF_1(t,\bx,\mu)dt.
$$

Consider the following hypotheses:
 \begin{itemize}
\item[{\bf A1.}]  {\it There exists a continuous curve $\mu\in J \mapsto\bx_\mu \in \Omega,$ defined in an interval $J\ni \mu_0,$ such that $\bg_1(\bx_\mu,\mu)=0$ for every $\mu\in J\subset\R,$  the pair of complex conjugated eigenvalues  $\al(\mu)\pm i \beta(\mu)$ of $D_\bx\bg_1(\bx_\mu,\mu)$  satisfies $\al(\mu_0)=0$ and $\beta(\mu_0)=\omega_0,$ and  $D_\bx\bg_1(\bx_{\mu_0},0)$ is in its real Jordan normal form;}

\smallskip

\item[{\bf A2.}] {\it  Let $\al(\mu)\pm i \beta(\mu)$ be the pair of complex conjugated eigenvalues of $D_\bx\bg_1(\bx_\mu,\mu)$ such that $\al(\mu_0)=0,$ $\beta(\mu_0)=\omega_0>0.$ Assume that $\al'(\mu_0)\neq0$.}
\end{itemize}

Finally, define the number
\begin{equation}\label{l1TA}
\begin{array}{rl}
\ell_1&=\dfrac{1}{8}\left(\dfrac{\partial ^3\bg_1^1(\bx_{\mu_0},\mu_0)}{\partial x^3}+\dfrac{\partial ^3\bg_1^1(\bx_{\mu_0},\mu_0)}{\partial x\partial y^2}+\dfrac{\partial ^3\bg_1^2(\bx_{\mu_0},\mu_0)}{\partial x^2\partial y}+\dfrac{\partial ^3\bg_1^2(\bx_{\mu_0},\mu_0)}{\partial y^3}\right)\\
&+\dfrac{1}{8\omega_0}\left(\dfrac{\partial ^2\bg_1^1(\bx_{\mu_0},\mu_0)}{\partial x\partial y}\Big(\dfrac{\partial ^2\bg_1^1(\bx_{\mu_0},\mu_0)}{\partial x^2}+\dfrac{\partial ^2\bg_1^1(\bx_{\mu_0},\mu_0)}{\partial y^2}\Big)-\dfrac{\partial ^2\bg_1^2(\bx_{\mu_0},\mu_0)}{\partial x\partial y}\right.\\
&\Big(\dfrac{\partial ^2\bg_1^2(\bx_{\mu_0},\mu_0)}{\partial x^2}+\dfrac{\partial ^2\bg_1^2(\bx_{\mu_0},\mu_0)}{\partial y^2}\Big)-\dfrac{\partial ^2\bg_1^1(\bx_{\mu_0},\mu_0)}{\partial x^2}\dfrac{\partial ^2\bg_1^2(\bx_{\mu_0},\mu_0)}{\partial x^2}\\
&\left.+\dfrac{\partial ^2\bg_1^1(\bx_{\mu_0},\mu_0)}{\partial y^2}\dfrac{\partial ^2\bg_1^2(\bx_{\mu_0},\mu_0)}{\partial y^2}\right).
\end{array}
\end{equation}

We observe that hypothesis {\bf A1} implies the existence of a neighborhood $J_0\subset J$ of $\mu_0$ and $\e_1,$ $0<\e_1<\e_0$ such that, for every $(\mu,\e)\in J_0\times(-\e_1,\e_1),$ the differential equation \eqref{tm1} admits a unique $T$-periodic solution $\f(t,\mu,\e)$ satisfying $\f(0,\mu,\e)\to \bx_{\mu}$ as $\e\to0$ (see \cite[Lemma 1]{ITCanNov2018}). When  the differential equation \eqref{tm1} is defined in the extended phase space $\s^1\times\Omega,$  such a periodic solution is given by $\Phi(t,\mu,\e)=(t,\f(t,\mu,\e)).$

\begin{theorem}[\cite{ITCanNov2018}]\label{teo1} In addition to hypotheses {\bf A1} and {\bf A2}, assume that  $\ell_1\neq0.$ Then, for each $\e>0$ sufficiently small, there exists a $C^1$ curve $\mu(\e)\in J_0,$ with  $\mu(0)=\mu_0,$  neighborhoods $\U_{\e}\subset \s^1\times\Omega$ of the periodic solution $\Phi(t,\mu(\e),\e),$ and intervals  $J_{\e}\subset J_0$ of $\mu(\e)$ for which the following statements hold.
\begin{itemize}

\item[$(i)$] For $\mu\in J_{\e}$ such that $\ell_1(\mu-\mu(\e))\geq0,$ the periodic solution $\Phi(t,\mu(\e),\e)$ is unstable (resp. asymptotically stable), provided that $\ell_1>0$ (resp. $\ell_1<0$), and the differential equation \eqref{tm1} does not admit any invariant tori in $\U_{\e}.$

\item[$(ii)$] For $\mu\in J_{\e}$ such that $\ell_1(\mu-\mu(\e))<0,$ the differential equation \eqref{tm1} admits a unique invariant torus $T_{\mu,\e}$ in $\U_{\e}$ surrounding the periodic solution $\Phi(t,\mu,\e).$ Moreover,  $T_{\mu,\e}$ is unstable (resp. asymptotically stable), whereas the periodic solution $\Phi(t,\mu,\e)$ is asymptotically stable (resp. unstable), provided that $\ell_1>0$ (resp. $\ell_1<0$).

\item[$(iii)$] $T_{\mu,\e}$ is the unique invariant torus of the differential equation \eqref{tm1} bifurcating from the periodic solution $\Phi(t,\mu(\e),\e)$ in $\U_{\e}$ as $\mu$ passes through $\mu(\e).$
\end{itemize}
\end{theorem}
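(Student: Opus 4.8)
To prove Theorem~\ref{teo1} one follows \cite{ITCanNov2018}; I sketch that strategy. The plan is to transport the problem to the Poincar\'e map $\Pi(\bz,\e,\mu)=\bx(T,\bz,\e,\mu)$ on the section $\Sigma=\{t=0\}$, and to exhibit a non-degenerate \emph{Neimark--Sacker bifurcation} of the one-parameter family of planar diffeomorphisms $\bz\mapsto\Pi(\bz,\e,\mu)$, with $\mu$ the bifurcation parameter and $\e>0$ a small frozen scaling parameter. Since \eqref{tm1} is in the standard form of the averaging theory, the computation leading to \eqref{quad} gives $\Pi(\bz,\e,\mu)=\bz+\e\,\bg_1(\bz,\mu)+\CO(\e^2)$, where $\bg_1(\bz,\mu)=\int_0^T\bF_1(t,\bz,\mu)\,dt$. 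By hypothesis {\bf A1}, $\bg_1(\bx_\mu,\mu)=0$ and $D_\bx\bg_1(\bx_\mu,\mu)$ is invertible for $\mu$ near $\mu_0$ (its eigenvalues $\al(\mu)\pm i\beta(\mu)$ satisfy $\beta(\mu_0)=\omega_0>0$). Hence, writing the fixed-point equation $\bz=\Pi(\bz,\e,\mu)$ as $0=\bg_1(\bz,\mu)+\e\,R(\bz,\e,\mu)$ after dividing the displacement by $\e$, the implicit function theorem yields a $C^1$ branch $\bz(\e,\mu)$ of fixed points of $\Pi$, defined for $\mu$ in a neighborhood $J_0$ of $\mu_0$, with $\bz(0,\mu)=\bx_\mu$. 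This is exactly the periodic solution $\f(t,\mu,\e)$ of the statement (cf.\ \cite[Lemma 1]{ITCanNov2018}), lifting to $\Phi(t,\mu,\e)=(t,\f(t,\mu,\e))$ in $\s^1\times\Omega$.

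Next I would analyze the multipliers at this fixed point. From $\Pi(\bz,\e,\mu)=\bz+\e\,\bg_1(\bz,\mu)+\CO(\e^2)$ one gets $D_\bz\Pi(\bz(\e,\mu),\e,\mu)=I+\e\,D_\bx\bg_1(\bx_\mu,\mu)+\CO(\e^2)$, whose eigenvalues are $\lambda_\pm(\e,\mu)=1+\e\big(\al(\mu)\pm i\beta(\mu)\big)+\CO(\e^2)$. Thus $|\lambda_\pm(\e,\mu)|^2=1+2\e\,\al(\mu)+\CO(\e^2)$, and using $\al(\mu_0)=0$ together with $\al'(\mu_0)\neq0$ from {\bf A2}, the implicit function theorem produces a $C^1$ curve $\mu(\e)\in J_0$, $\mu(0)=\mu_0$, on which $|\lambda_\pm|=1$, with transversal crossing $\tfrac{d}{d\mu}|\lambda_+(\e,\mu)|\neq0$ at $\mu=\mu(\e)$ for all small $\e$. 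Moreover $\arg\lambda_+(\e,\mu(\e))=\e\,\omega_0+\CO(\e^2)$, which is nonzero and close to $0$ for $0<\e\ll1$; in particular it avoids the strong resonances $\{0,\pi/2,2\pi/3,\pi\}$, so $\bigl(\lambda_+(\e,\mu(\e))\bigr)^j\neq1$ for $j=1,2,3,4$. (As $\e\to0$ the rotation angle tends to $0$, which is why the bifurcation neighborhoods $\U_\e$ in the conclusion must depend on $\e$.) This verifies the eigenvalue crossing and non-resonance hypotheses of the discrete-time Neimark--Sacker theorem for each fixed small $\e>0$.

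The crux of the argument, and the step I expect to be the main obstacle, is the first Lyapunov (cubic non-degeneracy) coefficient. I would bring $D_\bx\bg_1(\bx_{\mu_0},\mu_0)$ into real Jordan normal form (allowed by {\bf A1}), Taylor-expand $\Pi(\cdot,\e,\mu(\e))$ at $\bz(\e,\mu(\e))$ up to cubic order, and insert the resulting quadratic and cubic coefficients into the standard normal-form formula for the first Lyapunov coefficient $L_1(\e)$ of a Neimark--Sacker fixed point. Because $\Pi(\bz,\e,\mu)=\bz+\e\,\bg_1(\bz,\mu)+\CO(\e^2)$, each quadratic and cubic coefficient of $\Pi$ equals $\e$ times the corresponding derivative of $\bg_1^1,\bg_1^2$ plus $\CO(\e^2)$, while $\lambda_+-1=\CO(\e)$ and $1-\lambda_+\sim -i\e\omega_0$ makes the denominators appearing in the quadratic-term contributions of order $1/(\e\omega_0)$; balancing these orders one obtains $L_1(\e)=c\,\e\,\ell_1+\CO(\e^2)$ for an explicit constant $c\neq0$, with $\ell_1$ precisely the combination \eqref{l1TA} of second- and third-order derivatives of $\bg_1^1,\bg_1^2$. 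The delicate bookkeeping is to confirm that the $\CO(\e^2)$ part of $\Pi$ (equivalently, the higher-order averaged functions) perturbs $L_1(\e)$ only at order $\e^2$, so that it does not interfere with the leading term. Granting this, $\ell_1\neq0$ forces $L_1(\e)\neq0$ and $\sgn L_1(\e)=\sgn\ell_1$ for $\e$ small.

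Finally, with all hypotheses satisfied, the classical Neimark--Sacker theorem applies to $\bz\mapsto\Pi(\bz,\e,\mu)$ for each small $\e>0$, producing neighborhoods $\U_\e\subset\s^1\times\Omega$ of $\Phi(t,\mu(\e),\e)$ and intervals $J_\e\ni\mu(\e)$ such that: for $\ell_1(\mu-\mu(\e))\geq0$ the fixed point $\bz(\e,\mu)$ is unstable if $\ell_1>0$ and asymptotically stable if $\ell_1<0$, with no invariant closed curve in the section; and for $\ell_1(\mu-\mu(\e))<0$ a unique invariant closed curve $\Gamma_{\mu,\e}$ encircles the fixed point, asymptotically stable if $\ell_1<0$ and unstable if $\ell_1>0$, with the fixed point having the opposite stability, and this is the only invariant closed curve bifurcating from it as $\mu$ passes $\mu(\e)$. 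Lifting back to $\s^1\times\Omega$, the fixed point corresponds to the periodic orbit $\Phi(t,\mu,\e)$ and the invariant closed curve to a unique invariant torus $T_{\mu,\e}$ surrounding it, with stabilities and uniqueness preserved under this correspondence; this yields statements $(i)$--$(iii)$.
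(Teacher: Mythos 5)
Your sketch is correct and follows exactly the strategy this paper attributes to the cited source \cite{ITCanNov2018}: the paper does not prove Theorem~\ref{teo1} itself but imports it, describing its proof as establishing a Neimark--Sacker bifurcation for the Poincar\'e map $\Pi(\bz,\e,\mu)$, with the first Lyapunov coefficient of order $\e\,\ell_1$ and the non-resonance coming from the rotation angle $\e\,\omega_0+\CO(\e^2)$. The only step you flag but do not carry out --- verifying that the $\CO(\e^2)$ remainder of $\Pi$ shifts the Lyapunov coefficient only at order $\e^2$ --- is indeed the technical core of the reference, and your identification of it as the crux is accurate.
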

This result will be used for proving Theorem $1(c)$ (see Proposition \eqref{p5}).

\section{Proofs of Theorems \ref{tvd1} and \ref{tvd2}}\label{pr}
We are going to use the classical averaging method, Theorem \ref{CAT} for detecting periodic solutions stated in Theorem \ref{tvd1}$(a)$.

\begin{proposition}\label{p1}
Consider system \eqref{gvd} with coefficients \eqref{coefs} satisfying the hypotheses of Theorem \ref{tvd1}$(a)-(b)$. Then for $|\e|\neq 0$ sufficiently small, system \eqref{gvd} has three periodic solutions $\varphi_0(t,\e)$ and $\varphi_\pm(t,\e)$ satisfying $\varphi_0(t,\e)\to (0,0,0)$ and $\varphi_\pm(t,\e)\to (0,0,0)$ when $\e\to 0$.
\end{proposition}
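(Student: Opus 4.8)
The plan is to put system \eqref{gvd} into the standard form \eqref{tm13} of the averaging theory by means of a rescaling that blows up the origin, and then apply Theorem \ref{CAT} (classical averaging) to produce three simple zeros of the first nonvanishing averaged function. First I would substitute the $\e$-expansions \eqref{coefs} of the parameters, together with the constraints $\alpha_0=0$, $\beta_0=(k_0\nu_0+\omega^2)/h_0$, $\mu_0=0$, into the vector field; with these choices the linear part of \eqref{gvd} at the origin has eigenvalues $0,\pm i\omega$, and after a linear change of coordinates bringing the linear part to the real Jordan form $\mathrm{diag}$-block $\begin{pmatrix}0&-\omega\\\omega&0\end{pmatrix}\oplus(0)$ (this is exactly the zero-Hopf normalization underlying family $(iii)$), I would pass to cylindrical coordinates $x_1=r\cos\theta$, $x_2=r\sin\theta$ on the rotating plane, keeping the third coordinate $w$, and rescale $(r,w)=(\e\rho,\e W)$ (or the appropriate common power of $\e$ dictated by the cubic nonlinearity $x^3$ and the sizes of $\mu_1,\alpha_1$). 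Taking $\theta$ as the new time variable — legitimate since $\dot\theta=\omega+\CO(\e)>0$ for small $\e$ — yields a $2\pi$-periodic system $\frac{d(\rho,W)}{d\theta}=\e\,\bF_1(\theta,\rho,W)+\CO(\e^2)$ in the standard form, on a suitable open bounded $\Omega\subset\R^2$.

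Next I would compute the first averaged function $\bg_1(\rho,W)=\frac1{2\pi}\int_0^{2\pi}\bF_1(\theta,\rho,W)\,d\theta$ by elementary trigonometric integration (the only nonzero contributions come from the averages of $\cos^4\theta$, $\cos^2\theta$, $\cos\theta\sin\theta$, etc.). I expect $\bg_1$ to have the schematic form $\bg_1(\rho,W)=\big(\rho\,(a_1 - a_2\rho^2 + a_3 W^2 + \cdots),\ b_1\rho + b_2 W + \cdots\big)$ with coefficients built from $k_0,\nu_0,\omega,\mu_1,\alpha_1$; setting $\bg_1=0$ and discarding the trivial branch $\rho=0$ I expect to solve a system whose solution count and positivity are governed precisely by the quantities $\delta_a,\delta_b,\delta_c,\delta_d$ in \eqref{param}. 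The three solutions should be: one with $\rho>0$ symmetric about $W=0$ giving $\varphi_0$, and a symmetric pair $\rho_\pm$ giving $\varphi_\pm$; requiring all of $\delta_a,\delta_b,\delta_c,\delta_d>0$ is what makes the relevant square roots real and the relevant expressions positive so that all three $(\rho,W)$-values lie in $\Omega$ with $\rho>0$. For each of these zeros $\bz^*$ I would check the nondegeneracy condition $\det D\bg_1(\bz^*)\neq0$; generically this determinant is a nonzero rational function of the parameters on the region cut out by $\delta_a,\dots,\delta_d>0$, so Theorem \ref{CAT} applies and yields, for each zero, an isolated $2\pi$-periodic solution of the $\theta$-system. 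Undoing the time- and space-rescalings, each such solution corresponds to an isolated $T$-periodic solution of \eqref{gvd} (with $T=2\pi/\omega+\CO(\e)$) whose amplitude is $\CO(\e)$, hence tends to the origin as $\e\to0$, giving $\varphi_0(t,\e)$ and $\varphi_\pm(t,\e)$ with $\varphi_0,\varphi_\pm\to(0,0,0)$.

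The main obstacle I anticipate is bookkeeping rather than conceptual: getting the linear normalizing change of variables exactly right so that the $\e$-expansion of the transformed system is genuinely in the form \eqref{tm13} with a clean $\bF_1$, and then correctly identifying which power of $\e$ to rescale $(r,w)$ by so that the cubic term, the $\mu_1 x$ term and the $\alpha_1 y$ term all enter $\bF_1$ at the same order (this balance is what forces $\mu$ and $\alpha$ to be $\CO(\e)$ perturbations, i.e. $\mu_0=\alpha_0=0$, as in the hypothesis). After that, verifying that the three candidate zeros really lie in $\Omega$ with $\rho>0$ precisely under $\delta_a,\delta_b,\delta_c,\delta_d>0$, and that the Jacobian determinant does not vanish there, is a finite computation that I would carry out with care but expect to go through. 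The stability statement of part $(b)$, and hence the identification of the $\lambda_i$, $A$, $B$, will follow by expanding the eigenvalues of the Poincar\'e map's Jacobian $M(\e)=I_3+\e D\bg_1(\bz(\e))+\CO(\e^2)$ as in Remark \ref{rem1}, but that is deferred to the part $(b)$ portion of the argument.
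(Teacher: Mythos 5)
Your plan coincides with the paper's proof: the authors perform exactly the linear normalization to real Jordan form, the rescaling $(\ov X,\ov Y,\ov Z)=\e(X,Y,Z)$, the passage to cylindrical coordinates with $\theta$ as new time, and then apply Theorem \ref{CAT} to the first-order averaged function $\bg_1(r,z)$, whose three zeros $(r_0,0)$ and $(r_1,\pm z_1)$ exist precisely when $\delta_a,\delta_b,\delta_c>0$ and whose Jacobian determinants factor as $\lambda_1\lambda_2$ and $\lambda_+\lambda_-$, nonzero under the stated hypotheses. The only minor inaccuracies in your sketch are that the symmetric pair of zeros share the same radial value and are symmetric in the $z$-coordinate (not a pair $\rho_\pm$), and that the nonvanishing of $\det D\bg_1$ is not merely generic but follows exactly from $\delta_a,\dots,\delta_d>0$ via the explicit factorization.
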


\begin{proof}
Assume that the hypothesis in Theorem \ref{tvd1} hold. We will use the regular algorithm for applying averaging method. First we will use the following linear change of variables in order to have the linear part of system \eqref{gvd} in its real Jordan normal form

$$(x,y,z)=\left(\frac{h_0 \ov{Z}-\nu_0\ov{X}}{\omega },\ov{Y},\frac{h_0 k_0 \ov{Z}-\ov{X} \left(k_0 \nu_0+\omega ^2\right)}{h_0 \omega }\right)
.$$
In addition,  rescaling the system  by taking $(\ov X,\ov Y,\ov Z)=\e( X,  Y,  Z)$, we get
\begin{align*}
\dot{X}=&\,-\omega\, Y+\e \big(k_0 \mu_1 \nu_0 \omega ^2 (h_0 Z-\nu_0 X)+k_0 \nu_0 (\nu_0 X-h_0 Z)^3\\
&+Y \omega ^3 (k_0 \nu_1-\beta_1 h_0)\big)+\CO(\e^2),\\
\dot{Y}=&\, \omega\, X+\e\Bigg(\frac{(h_0 k_1-h_1 k_0) (h_0 Z-\nu_0 X)+h_1 X \omega ^2}{h_0 \omega }-\al_1 Y\Bigg)+\CO(\e^2),\\
\dot{Z}=&\frac{\e}{h_0 \omega ^4}\big(\nu_0 \omega ^2 (\nu_0 X-h_0 Z) \left(h_0^2 Z^2-2 h_0 \nu_0 X Z+\nu_0 \left(\nu_0 X^2-k_0 \mu_1\right)\right)\\
&+k_0 \nu_0^2 (\nu_0 X-h_0 Z)^3+\nu_0 Y \omega ^3 (k_0 \nu_1-\beta_1 h_0)+\mu_1 \nu_0 \omega ^4 (h_0 Z-\nu_0 X)\\
&+\nu_1 Y \omega ^5\big)+\CO(\e^2).
\end{align*}
Now, consider the cylindrical variables
\begin{equation}\label{cylin}
(X,Y,Z)=(r\cos \theta,r\sin \theta, z).
\end{equation}
Notice that $\dot \T=\omega+\CO(\e),$ which is positive for $\e>0$ sufficiently small. Therefore, taking  $\theta$  as the new time of the system,  system \eqref{gvd} becomes the following non-autonomous differential system
\begin{equation}\label{CAs}
\begin{split}
\dfrac{d r}{d \theta}=&\frac{k_0 \nu_0 \cos (\theta ) \left(h_0 \mu_1 \omega ^2 z-h_0^3 z^3\right)}{\omega ^5}+\frac{k_0 \nu_0^2 r \cos ^2(\theta ) \left(3 h_0^2 z^2-\mu_1 \omega ^2\right)}{\omega ^5}\\
&+\sin (\theta ) \Bigg(\frac{z (h_0 k_1-h_1 k_0)}{\omega ^2}+\frac{r \cos (\theta ) }{h_0 \omega ^2}\big(h_0 k_0 \nu_1-\beta_1 h_0^2-h_0 k_1 \nu_0+h_1 k_0 \nu_0\\
&+h_1 \omega ^2\big)\Bigg)-\frac{3 h_0 k_0 \nu_0^3 r^2 z \cos ^3(\theta )}{\omega ^5}+\frac{k_0 \nu_0^4 r^3 \cos ^4(\theta )}{\omega ^5}-\frac{\alpha_1 r \sin ^2(\theta )}{\omega }+\CO(\e^2)\vspace{0.1cm}\\
=&\e\,F^1_1(\theta,r,z)+\CO(\e^2),\vspace{0.3cm}\\
\dfrac{d z}{d \theta}=&\frac{\nu_0^2 r \cos (\theta ) \left(k_0 \nu_0+\omega ^2\right) \left(3 h_0^2 z^2-\mu_1 \omega ^2\right)}{h_0 \omega ^5}+\frac{\nu_0 z \left(k_0 \nu_0+\omega ^2\right) \left(\mu_1 \omega ^2-h_0^2 z^2\right)}{\omega ^5}\\
&+\frac{\nu_0^4 r^3 \cos ^3(\theta ) \left(k_0 \nu_0+\omega ^2\right)}{h_0 \omega ^5}+\frac{r \sin (\theta ) \left(\nu_1 \left(k_0 \nu_0+\omega ^2\right)-\beta_1 h_0 \nu_0\right)}{h_0 \omega ^2}\\
&-\frac{3 \nu_0^3 r^2 z \cos ^2(\theta ) \left(k_0 \nu_0+\omega ^2\right)}{\omega ^5}+\CO(\e^2)\vspace{0.1cm}\\
=&\e\,F^2_1(\theta,r,z)+\CO(\e^2).
\end{split}
\end{equation}
Observe that the non-autonomous differential system \eqref{CAs} is  written in the standard form \eqref{tm13} for applying the averaging theorem. Thus, identifying
\[
t=\theta, \ T=2\pi, \ \bz=(r,z), \text{ and } \bF_1(\theta,r,z)=\Big(F^1_1(\theta,r,z),F_1^2(\theta,r,z)\Big),
\]
we compute the first-order averaged function \eqref{avf}, $\bg_1(r,z)=\big(\bg_1^1(r,z),\bg_1^2(r,z)\big)$, as
\begin{equation}\label{avfCA}
\begin{split}
\bg_1^1(r,z)=& r \left(\frac{3 h_0^2 k_0 \nu_0^2 z^2}{2 \omega ^5}-\frac{\al_1 \omega ^2+k_0 \nu_1 \nu_0^2}{2 \omega ^3}\right)+\frac{3 k_0 \nu_0^4 r^3}{8 \omega ^5},\vspace{0.2cm}\\
\bg_1^2(r,z)=&-\frac{h_0^2 \nu_0 z^3 \left(k_0 \nu_0+\omega ^2\right)}{\omega ^5}-\frac{3 \nu_0^3 r^2 z \left(k_0 \nu_0+\omega ^2\right)}{2 \omega ^5}+\frac{\nu_1 \nu_0 z \left(k_0 \nu_0+\omega ^2\right)}{\omega ^3}.
\end{split}
\end{equation}
We are interested in the solutions of the non-linear system $\bg_1(r,z) =(0,0)$ with $r>0$. In this case, since $\delta_a>0,$ $\delta_b>0,$ and $\delta_c>0$, there are three solutions $(r_0,0)$, $(r_1,z_1)$ and $(r_1,z_2)$, namely
\begin{equation*}
\begin{split}
r_0=\frac{2\omega }{\nu_0^2}\sqrt{\frac{\al_1 \omega ^2+k_0\mu_1 \nu_0^2 }{3 k_0}},&\quad
r_1=\frac{2\omega }{\nu_0^2} \sqrt{\frac{2 k_0\mu_1 \nu_0^2 -\al_1 \omega ^2}{15 k_0}},\\
z_1=\frac{\omega}{h_0 \nu_0}\sqrt{\frac{2 \al_1 \omega ^2+k_0\mu_1 \nu_0^2}{5 k_0}},&\quad
z_2=-\frac{\omega}{h_0 \nu_0}  \sqrt{\frac{2 \al_1 \omega ^2+k_0\mu_1 \nu_0^2}{5 k_0}}.
\end{split}
\end{equation*}

 Moreover, the Jacobian determinant of $\bg_1$ at $(r_0,0)$ and $(r_1,z_i)$ for $i=1,2$ is given respectively by
\[
\det\left(r_0,0)\right)=-\dfrac{\delta_d \delta_c \delta_c}{k_0^3 \al_1 \nu_0 \omega^6}=\lambda_1\lambda_2 \quad \mbox{and} \quad
\det\left(r_1,z_i)\right)=\frac{2 \delta_a \delta_b \delta_d}{5 k_0^3 \nu_0 \al_1\omega ^6}=\lambda_-\lambda_+,
\]
and from our assumptions we have that $\det\left(r_0,0)\right)\neq 0$ and $\det\left(r_1,z_i)\right)\neq 0$. Thus, the result follows by applying Theorem  \ref{CAT} and going back through the changes of variables \eqref{cylin}.
\end{proof}

The bifurcation of invariant tori  stated in Theorem \ref{tvd1}$(c)$
will be proved using Theorem \ref{teo1}.

\begin{proposition}\label{p5} Consider $\ell_1$ as defined in \eqref{l1TA} and assume that system \eqref{gvd} satisfies the hypotheses of Proposition \ref{p1}. If $\ell_1\neq 0$, then there exists a smooth curve $\mu(\e),$ defined for $\e>0$ sufficiently small and satisfying
$$
\widehat{\mu}_1=-\frac{\al_1 \left(5 k_0 \nu_0 +4 \omega ^2\right)}{2 k_0 \nu_0 ^2},
$$
and intervals $J_{\e}$ containing $\mu(\e)$ such that a unique invariant torus bifurcates from the periodic solution $\varphi(t,\mu(\e),\e)$ as $\mu_1$ passes through $\mu(\e).$ Such a torus exists whenever $\mu_1\in J_{\e}$ and $\ell_1(\mu_1-\mu(\e))<0,$ and surrounds the periodic solution $\f(t,\mu_1,\e).$ In addition,  if $\ell_1>0$ (resp. $\ell_1<0$) such a torus is unstable (resp. asymptotically stable), whereas the periodic solution $\varphi(t,\mu_1,\e)$ is asymptotically stable (resp. unstable).
\end{proposition}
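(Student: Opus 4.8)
The plan is to cast the planar non-autonomous system \eqref{CAs} obtained in the proof of Proposition \ref{p1} into the framework of Theorem \ref{teo1}, taking the distinguished parameter to be $\mu:=\mu_1$, with $\bx:=(r,z)$, $t:=\theta$, $T:=2\pi$ and bifurcation value $\mu_0:=\widehat\mu_1$, and then to verify hypotheses {\bf A1}, {\bf A2} together with the nondegeneracy $\ell_1\neq0$. System \eqref{CAs} is already of the form \eqref{tm1}, with first-order averaged function $\bg_1(r,z,\mu_1)=(\bg_1^1,\bg_1^2)$ given by \eqref{avfCA}, a cubic polynomial in $(r,z)$ whose coefficients depend analytically on $\mu_1$. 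From Proposition \ref{p1}, $\bg_1(\cdot,\mu_1)$ has the zero $\bx_{\mu_1}=(r_1(\mu_1),z_1(\mu_1))$ with $r_1=\tfrac{2\omega}{\nu_0^2}\sqrt{(2k_0\mu_1\nu_0^2-\al_1\omega^2)/(15k_0)}$ and $z_1=\tfrac{\omega}{h_0\nu_0}\sqrt{(2\al_1\omega^2+k_0\mu_1\nu_0^2)/(5k_0)}$; since the Jacobian determinant of $\bg_1$ along this branch equals $\lambda_+\lambda_-$, which is nonzero near $\widehat\mu_1$, the implicit function theorem shows the branch is analytic on an open interval $J\ni\widehat\mu_1$, provided the standing hypotheses $\delta_a>0$ and $\delta_b>0$ persist there. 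Because $\bg_1^1$ is even and $\bg_1^2$ odd in $z$, the twin zero $(r_1,-z_1)$, corresponding to $\f_-$, is obtained from the symmetry $(r,z)\mapsto(r,-z)$ of $\bg_1$, so the whole argument below transfers to $\f_-$ verbatim.

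To verify {\bf A1} and {\bf A2} I would argue as follows. Along $\bx_{\mu_1}$ the eigenvalues of $D_{(r,z)}\bg_1(\bx_{\mu_1},\mu_1)$ are the quantities $\lambda_\pm=A(\mu_1)\pm\tfrac{\al_1}{8\omega^3}\sqrt{B}$ of Theorem \ref{tvd1}(b), in which $A$ and $B$ depend on $\mu_1$; in the parameter region where $B<0$ they form a complex-conjugate pair $\al(\mu_1)\pm i\beta(\mu_1)$ with $\al(\mu_1)=A(\mu_1)$ and $\beta(\mu_1)=\tfrac{|\al_1|}{8\omega^3}\sqrt{-B}$, and after shrinking $J$ we may assume $B<0$ on all of $J$. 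Solving $A(\mu_1)=0$ gives precisely $\mu_1=\widehat\mu_1=-\al_1(5k_0\nu_0+4\omega^2)/(2k_0\nu_0^2)$, so at that value the eigenvalues are purely imaginary $\pm i\omega_0$ with $\omega_0:=\beta(\widehat\mu_1)>0$. A constant linear change of coordinates $\bx\mapsto L^{-1}\bx$, where $L$ carries $D_{(r,z)}\bg_1(\bx_{\widehat\mu_1},\widehat\mu_1)$ into its real Jordan normal form $\left(\begin{smallmatrix}0&-\omega_0\\\omega_0&0\end{smallmatrix}\right)$, then puts \eqref{CAs} in a form to which {\bf A1} applies, the averaged function becoming $\bx\mapsto L^{-1}\bg_1(L\bx,\mu_1)$; for brevity I keep writing $\bg_1$ for this transformed function. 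For {\bf A2}, since $\al(\mu_1)=A(\mu_1)$ one gets $\al'(\widehat\mu_1)=\dfrac{dA}{d\mu_1}=\dfrac{\nu_0(k_0\nu_0+2\omega^2)}{4\omega^3}\neq0$, using $\nu_0\neq0$ and $k_0\nu_0+2\omega^2\neq0$, so the eigenvalues cross the imaginary axis transversally.

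It then remains to compute the first Lyapunov coefficient $\ell_1$ of \eqref{l1TA} for the transformed $\bg_1$ at the point $L^{-1}\bx_{\widehat\mu_1}$. Writing $L$ explicitly from the eigenvectors of $D_{(r,z)}\bg_1(\bx_{\widehat\mu_1},\widehat\mu_1)$, substituting the transformed components $\bg_1^1,\bg_1^2$ into \eqref{l1TA}, evaluating the required second- and third-order partials at $L^{-1}\bx_{\widehat\mu_1}$, and replacing $\omega_0$ by $\beta(\widehat\mu_1)$ and $\mu_1$ by $\widehat\mu_1$, the expression should simplify to a nonzero rational multiple of $\ell_{1,1}=-\dfrac{h_0^2\nu_0(5k_0^2\nu_0^2+16k_0\nu_0\omega^2+8\omega^4)}{k_0\nu_0+\omega^2}$; hence $\ell_1\neq0$ is equivalent to the hypothesis $\ell_{1,1}\neq0$. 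With {\bf A1}, {\bf A2} and $\ell_1\neq0$ in hand, Theorem \ref{teo1} produces the $C^1$ curve $\mu(\e)$ with $\mu(0)=\widehat\mu_1$, the neighbourhoods $\U_\e$, the intervals $J_\e$, the unique invariant torus surrounding $\f(t,\mu_1,\e)$ whenever $\mu_1\in J_\e$ and $\ell_1(\mu_1-\mu(\e))<0$, and the announced exchange of stability between the torus and $\f(t,\mu_1,\e)$ governed by $\sgn(\ell_1)$. Undoing the diffeomorphic changes of variables of Proposition \ref{p1} (the normalizing map $L$, the rescaling, the cylindrical coordinates \eqref{cylin} with $\theta$ as time, and the return to the coordinates of \eqref{gvd}) gives the statement for $\f_+$, while the symmetry $z\mapsto-z$ yields the twin torus around $\f_-$.

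The main obstacle is this last computation of $\ell_1$: it requires the explicit normalizing transformation $L$, hence the eigenvectors of a $2\times2$ matrix with the rather involved entries of $D_{(r,z)}\bg_1$, and then the evaluation and simplification of the degree-three expression \eqref{l1TA} — mechanical but heavy, and best carried out with a computer algebra system, the delicate point being to confirm that the outcome collapses exactly to a nonzero multiple of $\ell_{1,1}$. A secondary technical point is to check that the parameter region in which $\delta_a,\delta_b,\delta_c,\delta_d>0$, $B<0$, $\nu_0\neq0$, $k_0\nu_0+2\omega^2\neq0$ and $k_0\nu_0+\omega^2\neq0$ simultaneously hold at $\mu_1=\widehat\mu_1$ is non-empty; this is exhibited by the numerical example of Section \ref{sec:ex}.
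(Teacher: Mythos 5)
Your proposal follows the same route as the paper's proof: recast \eqref{CAs} as a system of the form \eqref{tm1} with distinguished parameter $\mu_1$, conjugate $D\bg_1$ at the zero $(r_1,z_1)$ into real Jordan normal form, verify hypotheses {\bf A1}--{\bf A2}, evaluate \eqref{l1TA}, and invoke Theorem \ref{teo1}, handling $(r_1,z_2)$ by symmetry. The paper does exactly this, with an explicit transformation $T$ and an explicit characteristic polynomial. Two points need attention.

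First, your identification $\al(\mu_1)=A(\mu_1)$ is inconsistent with the similarity-invariance of the trace: half the trace of $D_{(r,z)}\bg_1(\bx_{\mu_1},\mu_1)$ computed from \eqref{avfCA} equals $-\tfrac{1}{10\omega}\bigl(\al_1\bigl(5+\tfrac{4\omega^2}{k_0\nu_0}\bigr)+2\mu_1\nu_0\bigr)$, in agreement with the paper's characteristic polynomial, and this gives $\al'(\widehat{\mu}_1)=-\nu_0/(5\omega)$ rather than your $\nu_0(k_0\nu_0+2\omega^2)/(4\omega^3)$. The quantity $A$ from the introduction is a nonconstant scalar multiple of this half-trace, so it vanishes at the same $\widehat{\mu}_1$ (which is why your derivation of $\widehat{\mu}_1$ is still correct) but has a different derivative there. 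Since {\bf A2} only requires nonvanishing, the conclusion survives, but the computation as written is wrong. Second, and more substantively, you leave the evaluation of \eqref{l1TA} as a conjecture (``should simplify to a nonzero rational multiple of $\ell_{1,1}$''). This is the only nontrivial nondegeneracy check in the whole argument: the paper carries it out and obtains $\ell_1=-\tfrac{3\pi h_0^2\nu_0(5k_0^2\nu_0^2+16k_0\nu_0\omega^2+8\omega^4)}{8\omega^3(k_0\nu_0+\omega^2)}=\tfrac{3\pi}{8\omega^3}\,\ell_{1,1}$, a \emph{positive} (not rational) multiple of $\ell_{1,1}$, which is precisely what makes the hypothesis $\ell_{1,1}\neq0$ and the sign-dependent stability statements of Theorem \ref{tvd1}$(c)$ transfer to $\ell_1$. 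Without performing that computation the link between $\ell_{1,1}$ and the applicability of Theorem \ref{teo1} is asserted, not proved.
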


\begin{proof}
Consider the first-order averaging function $\bg_1$, defined in \eqref{avfCA}, of the non-autonomous differential system \eqref{CAs}.  The proof will be basically, putting the truncated averaging system
$$
\dot \bx=\e\,\bg_1(\bx),
$$
into the correct  coordinates in order to check hypotheses {\bf A1} and {\bf A2} and applying Theorem \ref{teo1}. We point out that we will proof our result for  $\by_{\mu_1}=(r_1,z_1)$. We observe that the case $\by_{\mu_1}=(r_1,z_2)$ can be done analogously.

First, we have to write $D \bg_1(\by_{\widehat{\mu}_1},\widehat{\mu}_1)$ in its normal Jordan form. Accordingly,  consider the linear change of variables
$$
(r,z)=T\cdot(u,v)=\left(\frac{ h_0 \omega ^2 }{\nu_0 } \left(\sqrt{5} u+v\right) \sqrt{\frac{\al_1 k_0 \nu_0 }{6 \omega ^2 \delta_d}},v\right).
$$
Denoting $\bx_{\mu_1}=T^{-1}\,\by_{\mu_1}$ we have
\begin{align*}
\bx_{\mu_1}&=\frac{1}{h_0 \nu_0 }\Bigg(\frac{1}{5 }\sqrt{\frac{\delta_b}{k_0^2}}+\frac{2  \nu_0}{5  k_0} \sqrt{\frac{2 \delta_d \delta_a}{k_0\al_1\nu_0 ^3}},-\sqrt{\frac{\delta_b}{5 k_0^2}}\Bigg),
\end{align*}
 and we see that the Jacobian matrix $D\tilde \bg_1(\bx_{\widehat{\mu}_1},\widehat{\mu}_1)$  is in its normal Jordan form.
Moreover, taking $\bx=(u,v)$, we get
\begin{equation*}\label{npromed}
\begin{split}
\tilde\bg_1^1(\bx,\mu_1)&=\frac{\nu_0 }{16 \sqrt{5} \omega ^3} \Bigg(\frac{h_0^2}{k \nu_0 +\omega ^2} \big(5 k_0^2 \nu_0 ^2 \big(\sqrt{5} u^3+7 u^2 v+7 \sqrt{5} u v^2+9 v^3\big)+4 k_0 \\
&. \nu_0  v \omega ^2\big(5 u^2+8 \sqrt{5} u v+15 v^2\big)+16 v^3 \omega ^4\big)-8 k_0 \mu_1\nu_0  \big(\sqrt{5} u+3 v\big)\\
&-\frac{8 \omega ^2}{\nu_0} \big(\sqrt{5} \al_1 u+v (\al_1+2 \mu_1\nu_0)\big)\Bigg),\\
\tilde\bg_1^2(\bx,\mu_1)&=-\frac{\nu_0  v }{4 \omega ^3}\Big(h_0^2 \left(k_0 \nu_0  \left(5 u^2+2 \sqrt{5} u v+5 v^2\right)+4 v^2 \omega ^2\right)\\
&-4 \mu_1\left(k_0 \nu_0 +\omega ^2\right)\Big).
\end{split}
\end{equation*}
Now, in order to check hypothesis {\bf A1}, let
\[
 \omega_0=\frac{\sqrt{5}\, \delta_d}{\omega ^3}.
 \]
We compute the characteristic polynomial of the Jacobian matrix $D\tilde\bg_1(\bx_{\widehat{\mu}_1},\widehat{\mu}_1),$ obtaining
\[
\begin{split}\label{charp}
p(\lambda) =&
\lambda ^2+\frac{\lambda }{5 \omega } \left(\al_1 \left(\frac{4 \omega ^2}{k_0 \nu }+5\right)+2 \mu_1 \nu_0 \right)+\dfrac{2 \delta_a \delta_b \delta_d}{5 \al_1 k_0^3 \nu_0  \omega ^6},
\end{split}
\]
where $\delta_a$, $\delta_b$ and $\delta_d$ are defined in \eqref{param}.  Denoting the roots of $p(\lambda)$ by $\lambda(\mu_1)=a (\mu_1)\pm i\, b(\mu_1)$, it is  straightforward to see that $a(\widehat{\mu}_1)=0$ and $b(\widehat{\mu}_1)=\omega_0$. So, hypothesis {\bf A1} holds.

In order to verify hypothesis {\bf A2}, we compute the derivative of the real part of the eigenvalues of \eqref{charp} at $\mu_1=\widehat{\mu}_1$, and we obtain the relation
$$
\dfrac{d \,a(\widehat{\mu}_1)}{d \mu_1}=-\frac{\nu_0 }{5 \omega }\neq 0,
$$
which verifies hypothesis {\bf A2}.  Finally, taking \eqref{l1TA} into account, we compute
\begin{align*}
\ell_{1}=&-\frac{3 \pi  h_0^2 \nu_0  \left(5 k_0^2 \nu_0 ^2+16 k_0 \nu_0  \omega ^2+8 \omega ^4\right)}{8 \omega ^3 \left(k_0 \nu_0 +\omega ^2\right)}\neq 0.
\end{align*}
Hence, we conclude the proof of Theorem \ref{tvd1}$(c)$ by applying Theorem \ref{teo1}.
\end{proof}

The classical averaging method, Theorem \ref{CAT}, will be used for proving the next two propositions.
\begin{proposition}\label{p2}
Assume that system \eqref{gvd} with coefficients in \eqref{coefs} satisfies  hypothesis {\bf H}$_1$. Then, for $|\e|\neq 0$ sufficiently small,  system \eqref{gvd}  has a periodic solution $\varphi(t,\e)$ satisfying $\varphi(t,\e)\to (0,0,0)$ when $\e\to 0$.
\end{proposition}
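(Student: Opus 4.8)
The plan is to follow verbatim the strategy of the proof of Proposition \ref{p1}, applying the classical averaging method (Theorem \ref{CAT}) at first order after bringing system \eqref{gvd} into the standard form \eqref{tm13}. First I would note that, under hypothesis {\bf H}$_1$, the linearization of \eqref{gvd} at the origin has eigenvalues $0$ and $\pm i\omega$, so there is a real linear change of coordinates $(x,y,z)=L(X,Y,Z)$ putting the linear part into its real Jordan normal form, the block $\left(\begin{smallmatrix}0&-\omega\\ \omega&0\end{smallmatrix}\right)$ together with the $1\times 1$ zero block. The matrix $L$ used in Proposition \ref{p1} divides by $h_0$ and is therefore unavailable here, since now $h_0=0$; instead one constructs $L$ from an eigenvector basis adapted to $h_0=0$, $k_0=-(\mu_0^2\nu_0^2+\omega^2)/\nu_0$, $\al_0=\mu_0\nu_0$. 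After that I would perform the homogeneous rescaling $(X,Y,Z)=\e(X,Y,Z)$, which pushes all the cubic terms and all the $\e$-corrections of the coefficients \eqref{coefs} into order $\e$, so that the system becomes $\dot X=-\omega Y+\e(\cdots)$, $\dot Y=\omega X+\e(\cdots)$, $\dot Z=\e(\cdots)$.

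Next I would pass to cylindrical coordinates $(X,Y,Z)=(r\cos\T,r\sin\T,z)$ as in \eqref{cylin}. Since $\dot\T=\omega+\CO(\e)>0$ for $\e>0$ small, $\T$ can be taken as the new independent variable, and dividing the $r$- and $z$-equations by $\dot\T$ yields a $2\pi$-periodic non-autonomous system in the standard form \eqref{tm13}, with $\bz=(r,z)$. I would then compute the first-order averaged function $\bg_1(r,z)=\frac{1}{2\pi}\int_0^{2\pi}\bF_1(\T,r,z)\,d\T$; the trigonometric integrals are routine and produce a polynomial vector field in $(r,z)$ whose coefficients are explicit rational functions of $h_1,k_0,k_1,\al_1,\be_0,\be_1,\mu_0,\mu_1,\nu_0,\nu_1$ and $\omega$.

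The crux is to exhibit a simple zero of $\bg_1$ with $r>0$. I expect the $z$-component of $\bg_1$ to factor so that an explicit value of $z$ (presumably $z=0$) is forced, after which the $r$-component reduces to an equation $c_1 r+c_3 r^3=0$ with $c_1$ a positive multiple of $\delta_1$ and $c_3$ a positive multiple of $\nu_0$; the condition $\delta_1\nu_0>0$ in {\bf H}$_1$ is then exactly what guarantees a positive root $r^\ast=\sqrt{-c_1/c_3}$, while $\be_1 h_1\mu_0\neq 0$ is exactly the non-degeneracy making $\det D\bg_1(r^\ast,z^\ast)\neq 0$. Granting this, Theorem \ref{CAT} yields an isolated $2\pi$-periodic solution of the $\T$-system, and undoing the passage to cylindrical coordinates, the linear change $L$, and the rescaling $(X,Y,Z)=\e(X,Y,Z)$ gives a periodic solution $\f(t,\e)$ of \eqref{gvd} with $\f(t,\e)\to(0,0,0)$ as $\e\to 0$. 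The main obstacle is the algebraic bookkeeping: one must check that $\bg_1$ is not identically zero (so that first-order averaging suffices and one need not climb to $\bg_2$ via \eqref{avf}) and that the coefficients of $\bg_1$ match the quantities appearing in {\bf H}$_1$, so that $\delta_1\nu_0>0$ is literally the existence criterion for the positive root and $\be_1 h_1\mu_0\neq 0$ is literally the nonvanishing of the Jacobian determinant.
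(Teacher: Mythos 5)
Your overall strategy coincides with the paper's: put the linear part into real Jordan form adapted to $h_0=0$, rescale the amplitude, pass to cylindrical coordinates with $\T$ as the new time, compute the first-order averaged function, and apply Theorem \ref{CAT} at a zero of the form $(r^\ast,0)$ whose existence is governed by $\delta_1\nu_0>0$ and whose nondegeneracy is the remaining hypothesis of {\bf H}$_1$. The one step that fails as written is the rescaling. You rescale by $\e$ and assert that this ``pushes all the cubic terms and all the $\e$-corrections of the coefficients \eqref{coefs} into order $\e$''; it does not. If $\ov X=\la X$, a cubic monomial $c\,\ov X^3=c\,\la^3X^3$ contributes $c\,\la^2X^3$ to $\dot X$ after dividing by $\la$, so with $\la=\e$ the cubic nonlinearity enters the standard form \eqref{tm13} only at order $\e^2$, whereas the first-order corrections of the coefficients enter at order $\e$. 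Your $\bg_1$ would then be linear in $(r,z)$, essentially $\bg_1=\big(r\delta_1/(2\omega^3),\,\beta_0h_1\mu_0\nu_0\,z/\omega^3\big)$ with no $r^3$ term, whose only zero is $r=0$; Theorem \ref{CAT} yields nothing at first order, and the balance $c_1r+c_3r^3=0$ that your argument hinges on never materializes. The paper rescales by $\sqrt{\e}$ precisely so that $\la^2=\e$ and the cubic lands at order $\e$ alongside the coefficient perturbations, giving $\bg_1^1=r\delta_1/(2\omega^3)-3\nu_0r^3/(8\omega)$ and $\bg_1^2=\beta_0h_1\mu_0\nu_0\,z/\omega^3$, with the simple zero $\big(\tfrac{2}{\omega}\sqrt{\delta_1/(3\nu_0)},\,0\big)$.

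With that single correction ($\e\mapsto\sqrt{\e}$ in the rescaling) the rest of your outline goes through and reproduces the paper's proof. Two minor remarks: the quantity that actually makes $\det D\bg_1(r^\ast,0)\neq0$ is $\beta_0h_1\mu_0\nu_0\,\delta_1$, not an expression in $\beta_1$ --- you are echoing what appears to be a typo in the statement of {\bf H}$_1$; and if you insisted on keeping the $\e$-rescaling you would be forced into second-order averaging or the Lyapunov--Schmidt machinery of Theorem \ref{LSt1}, which is not the route you describe.
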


\begin{proof}
Following the same method used in the proof of \eqref{p1}, the first step is to write  system \eqref{gvd}  in the standard form \eqref{tm13} in order to use the averaging theory for detecting its periodic solutions. So we consider the linear change of variables
$$(x,y,z)=\left(\ov X ,-\dfrac{\ov X \mu_0\nu_0+\ov Y \omega}{\nu_0},\dfrac{\ov Y \beta_0}{\nu_0}-\dfrac{\ov Y \beta_0 \mu_0+\ov Z \nu_0}{\omega} \right).$$
In addition, taking $(\ov  X, \ov  Y, \ov Z)=\sqrt{\e}(X,Y,Z)$, we see that the unperturbed system (that is, system with $\e=0$) in these new variables can be written as $\big(\dot X,\dot Y,\dot Z\big)=\big(- \omega Y,\omega X,0\big)$. Thus, using cylindrical coordinates $(X,Y,Z)=(r\cos\theta,r\sin \theta,z)$, we see that $\dot \T=\omega+\CO(\e),$ which is positive for $|\e|$ sufficiently small. Therefore, we take $\theta$ as the new time of the system so that  system \eqref{gvd} becomes a $T$-periodic non-autonomous differential system.

Observe that the non-autonomous differential system \eqref{s22} is  written in the standard form \eqref{tm13} for applying the averaging theorem. Thus, identifying
\[
t=\theta, \ T=2\pi, \ \bz=(r,z),  \text{ and } \bF_{i}(\theta,r,z)=\left( F_{i}^1(\theta,r,z), F_{i}^2(\theta,r,z)\right) \text{ for } i=1,2,
\]
we compute the first order averaged function  obtaining the first averaged function
$$
\bg_1(r,z)=\Bigg(\frac{r \big(\omega ^2 (\mu_0 \nu_1-\al_1+\mu_1 \nu_0)-\beta_0 h_1 \mu_0 \nu_0\big)}{2 \omega ^3}-\frac{3 \nu_0 r^3}{8 \omega },\frac{\beta_0 h_1 \mu_0 \nu_0 z}{\omega ^3}\Bigg).
$$
This function has the following simple zero
$$
(\ov r,\ov z)=\left(\frac{2}{\omega } \sqrt{\frac{\omega ^2 (\mu_0 \nu_1-\al_1+\mu_1 \nu_0)-\beta_0 h_1 \mu_0 \nu_0}{3 \nu_0}},0\right).
$$
In fact, we have the following Jacobian determinant
$$
|D\bg_1(\ov r,\ov z)|=\frac{\beta_0 h_1 \mu_0 \nu_0 \left(\omega ^2 (\al_1-\mu_0 \nu_1-\mu_1 \nu_0)+\beta_0 h_1 \mu_0 \nu_0\right)}{\omega ^6},
$$
which is non-zero by hypothesis. The proof of  the proposition follows from Theorem \ref{CAT} and going back through the changes of variables.
\end{proof}

\begin{proposition}\label{p3}Assume that system \eqref{gvd} with coefficients in \eqref{coefs} satisfies the hypothesis {\bf H}$_2$. Then, for $|\e|\neq 0$ sufficiently small,  system \eqref{gvd}  has a periodic solution $\varphi(t,\e)$ satisfying $\varphi(t,\e)\to (0,0,0)$ when $\e\to 0$.
\end{proposition}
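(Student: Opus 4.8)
The plan is to mimic verbatim the strategy of Proposition \ref{p2}, which is the companion statement for hypothesis {\bf H}$_1$: the two hypotheses differ only in which of $\beta_0 h_1$ or $\beta_1 h_0$ enters the relevant coefficient, so the structural steps are identical. First I would impose the constraints of {\bf H}$_2$, namely $\beta_0=0$, $k_0=-(\mu_0^2\nu_0^2+\omega^2)/\nu_0$, $\al_0=\mu_0\nu_0$ (together with the secondary conditions $\al_1=\al_2=0$, $\beta_0=\omega^2/h_0$, $\mu_0=0$ listed in the hypothesis — one must be careful to reconcile $\beta_0=0$ with $\beta_0=\omega^2/h_0$, presumably meaning the first block of relations is what is used to put the linear part in Jordan form and the second block is an additional standing assumption forcing $\mu_0=0$). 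With $\mu_0=0$ and these relations the origin is a zero-Hopf point: the linearization of \eqref{gvd} has eigenvalues $0,\pm i\omega$. I would then apply a linear change of variables $(x,y,z)\mapsto(\ov X,\ov Y,\ov Z)$ bringing the linear part to real Jordan normal form $(\dot X,\dot Y,\dot Z)=(-\omega Y,\omega X,0)$, exactly as displayed in the proof of Proposition \ref{p2}, followed by the rescaling $(\ov X,\ov Y,\ov Z)=\sqrt{\e}(X,Y,Z)$ (the square-root rescaling, rather than the $\e$-rescaling of Proposition \ref{p1}, because the nonlinearity responsible for the periodic orbit is the cubic Duffing term, so the relevant amplitude scales like $\sqrt{\e}$).

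Next I would pass to cylindrical coordinates $(X,Y,Z)=(r\cos\theta,r\sin\theta,z)$, check that $\dot\theta=\omega+\CO(\e)>0$ for small $\e$, and reparametrize by $\theta$ so that the system takes the standard form \eqref{tm13} with $T=2\pi$, $\bz=(r,z)$. Then I would compute the first-order averaged function $\bg_1(r,z)=\frac{1}{2\pi}\int_0^{2\pi}\bF_1(\theta,r,z)\,d\theta$; by analogy with Proposition \ref{p2} I expect
$$
\bg_1(r,z)=\Bigl(\tfrac{r\bigl(\omega^2(\mu_0\nu_1-\al_1+\mu_1\nu_0)-\beta_1 h_0\mu_0\nu_0\bigr)}{2\omega^3}-\tfrac{3\nu_0 r^3}{8\omega},\ \tfrac{\beta_1 h_0\mu_0\nu_0\, z}{\omega^3}\Bigr),
$$
i.e. the same shape as in {\bf H}$_1$ but with $\beta_0 h_1$ replaced by $\beta_1 h_0$; note that $\delta_2=\omega^2(\mu_0\nu_1-\al_1+\mu_1\nu_0)-\beta_1 h_0\mu_0\nu_0$ is precisely the coefficient appearing in the hypothesis. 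Solving $\bg_1(r,z)=(0,0)$ with $r>0$ gives the single admissible zero
$$
(\ov r,\ov z)=\Bigl(\tfrac{2}{\omega}\sqrt{\tfrac{\delta_2}{3\nu_0}},\,0\Bigr),
$$
which is well-defined and real precisely because $\delta_2\nu_0>0$.

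Finally I would compute the Jacobian determinant $|D\bg_1(\ov r,\ov z)|$; from the block-triangular structure of $\bg_1$ it equals (up to sign) $\tfrac{\beta_1 h_0\mu_0\nu_0}{\omega^3}$ times $\partial_r\bg_1^1(\ov r,0)$, and the latter is a nonzero multiple of $\delta_2$, so the whole determinant is a nonzero multiple of $\beta_1 h_0\mu_0\,\delta_2$, which is nonzero by the hypothesis $\beta_1 h_0\mu_0\neq 0$ (and $\delta_2\neq 0$ since $\delta_2\nu_0>0$). Then Theorem \ref{CAT} applies, giving an isolated $2\pi$-periodic solution of the $\theta$-system with $(r,z)\to(\ov r,0)$, and undoing the cylindrical change, the rescaling $(\ov X,\ov Y,\ov Z)=\sqrt{\e}(X,Y,Z)$, and the linear change of variables produces a periodic solution $\varphi(t,\e)$ of \eqref{gvd} with $\varphi(t,\e)\to(0,0,0)$ as $\e\to 0$. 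The only genuinely delicate point is the bookkeeping of the {\bf H}$_2$ constraints — in particular making sure the extra conditions $\al_1=\al_2=0$ and $\mu_0=0$ are consistently used and that they do not trivialize $\bg_1$ (they do not, because $\delta_2$ survives through the $\mu_1\nu_0$ and $\beta_1 h_0\mu_0\nu_0$ terms); the rest is the same routine computation as in Proposition \ref{p2}.
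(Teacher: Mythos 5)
Your proposal follows the paper's proof of Proposition \ref{p3} essentially verbatim: the same Jordan-normal-form change of variables, the $\sqrt{\e}$ rescaling, cylindrical coordinates with $\theta$ as new time, the first-order averaged function with simple zero $(\ov r,\ov z)=\bigl(\tfrac{2}{\omega}\sqrt{\delta_2/(3\nu_0)},0\bigr)$, and Theorem \ref{CAT}. The only discrepancy is that the paper's computed $g_1^1$ carries an extra term $-\tfrac{3h_0^2\nu_0^5 z^2}{2\omega^7}\,r$ absent from your guessed expression, but this term vanishes to second order at $z=0$ and so changes neither the zero set nor the Jacobian determinant $-\delta_2\beta_1 h_0\mu_0\nu_0/\omega^6$; your observation about the internal inconsistency of hypothesis {\bf H}$_2$ (it asserts both $\beta_0=0$ and $\mu_0=0$ alongside $\beta_1 h_0\mu_0\neq0$) is a fair criticism of the paper's statement rather than a gap in your argument.
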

\begin{proof}
Again we perform the linear change of variables
$$
(x,y,z)=\left(\ov X + \dfrac{h_0 \ov Z \nu_0^2}{\omega^3},-\ov X \mu_0-\dfrac{h_0\ov Z\mu_0 \nu_0^2}{\omega^3}-\dfrac{\ov Y \omega}{\nu_0},-\dfrac{\ov Z \nu_0}{\omega}\right),
$$
in order to put the linear part of system \eqref{gvd} in its Jordan normal form. In addition, we take $(\ov  X, \ov  Y, \ov Z)=\sqrt{\e}(X,Y,Z)$, and the unperturbed system becomes $\big(\dot X,\dot Y,\dot Z\big)=\big(- \omega Y,\omega X,0\big)$. Thus, taking cylindrical coordinates $(X,Y,Z)=(r\cos\theta,r\sin \theta,z)$, we have $\dot \T=\omega+\CO(\e),$. Therefore, we can take $\theta$ as the new time of the system so that  system \eqref{gvd} becomes the following non-autonomous differential system
\begin{equation*}
\dfrac{d r}{d \theta}=\e F_{1}^1(\theta,r,z)+\CO(\e^2),\quad
\dfrac{d z}{d\theta}=\e F_{1}^2(\theta,r,z)+\CO(\e^2),
\end{equation*}
 where $(\theta,r,z)\in\R\times\R_+\times \R$. Identifying
\[
t=\theta, \ T=2\pi, \ \bz=(r,z),  \text{ and } \bF_{1}(\theta,r,z)=\left( F_{1}^1(\theta,r,z), F_{1}^2(\theta,r,z)\right),
\]
we have that this non-autonomous differential system is in the standard form \eqref{tm13} for applying the averaging theorem. Thus,
we compute the first-order averaged function \eqref{avf} obtaining $\bg_1(r,z)=\big(g^1_1(r,z),g^2_1(r,z)\big)$ where
\begin{align*}
g_1^1(r,z)=&r \Bigg(\frac{\omega ^2 (\mu_0 \nu_1-\al_1+\mu_1 \nu_0)-\beta_1 h_0 \mu_0 \nu_0}{2 \omega ^3}-\frac{3 h_0^2 \nu_0^5 z^2}{2 \omega ^7}\Bigg)-\frac{3 \nu_0 r^3}{8 \omega },\\
g_1^2(r,z)=&\frac{\beta_1 h_0 \mu_0 \nu_0 z}{\omega ^3}.\\
\end{align*}
This function has the following simple zero
$$
(\ov r,\ov z)=\left(\frac{2}{\omega } \sqrt{\frac{\omega ^2 (\mu_0 \nu_1-\al_1+\mu_1 \nu_0)-\beta_1 h_0 \mu_0 \nu_0}{3 \nu_0}},0 \right).
$$
In fact, we have the following Jacobian determinant
$$
|D\bg_1(\ov r,\ov z)|=\frac{\beta_1 h_0 \mu_0 \nu_0 \left(\omega ^2 (\al_1-\mu_0 \nu_0-\mu_1 \nu_0)+\beta_1 h_0 \mu_0 \nu_0\right)}{\omega ^6}
$$
which is non-zero by hypothesis. The proof of the proposition follows from Theorem \ref{CAT} and going back through the changes of variables.
\end{proof}
In order to be able to detect a periodic solution of system \eqref{gvd} under the hypothesis {\bf H}$_3$ we need to apply Theorem \ref{LSt1}, that combines the Brouwer degree with the Lyapunov-Schimidt reduction.

\begin{proposition}\label{p4} Assume that system \eqref{gvd} with coefficients in \eqref{coefs} satisfies the hypothesis {\bf H}$_3$. Then, for $|\e|\neq 0$ sufficiently small,  system \eqref{gvd}  has a periodic solution $\varphi(t,\e)$ satisfying $\varphi(t,\e)\to (0,0,0)$ when $\e\to 0$.
\end{proposition}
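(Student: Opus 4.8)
The plan is to follow the scheme of Propositions \ref{p1}--\ref{p3} up to the moment when the averaging theory is invoked, but since hypothesis {\bf H}$_3$ forces $\nu_0=\mu_0=\al_0=\al_1=\al_2=0$, the first nonvanishing averaged function will only possess non-isolated zeros; hence the classical averaging Theorem \ref{CAT} cannot be applied and we must resort to Theorem \ref{LSt1}. First I would put the linear part of \eqref{gvd} at $\e=0$ into real Jordan normal form: with $\al_0=0$, $\beta_0=\omega^2/h_0$ and $\nu_0=0$ the linearization has the simple eigenvalue $0$ with eigenvector $(h_0,0,k_0)$ and the pair $\pm i\omega$ with eigenvectors in the plane $\{x=0\}$, so an explicit linear change of variables $(x,y,z)\mapsto(X,Y,Z)$ (up to normalization) brings the unperturbed system to $(\dot X,\dot Y,\dot Z)=(-\omega Y,\omega X,0)$. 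Next I would rescale $(X,Y,Z)=\e^{s}(X,Y,Z)$ with the exponent $s$ chosen so that the cubic term $-\nu x^3$, which is only $O(\e)$ because $\nu_0=0$, enters the averaging at the same order as the surviving linear corrections produced by $h_1,k_1,\nu_1,\mu_1$; passing to cylindrical coordinates $(X,Y,Z)=(r\cos\theta,r\sin\theta,z)$ and using $\theta$ as the new time (legitimate since $\dot\theta=\omega+\CO(\e)>0$ for $|\e|$ small) yields a $2\pi$-periodic system $(dr/d\theta,dz/d\theta)=\sum_{i\geq1}\e^i\bF_i(\theta,r,z)+\cdots$ in the standard form \eqref{tm13} with $\bz=(r,z)$.

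Then I would compute the averaged functions $\bg_1,\bg_2,\bg_3$ from the formulas of Section \ref{av}. Because of the vanishing of $\nu_0,\mu_0,\al_0,\al_1,\al_2$, I expect the first nonvanishing averaged function $\bg_\ell$ to vanish along a curve $\CZ=\{(r,z):z=\mathcal{B}(r)\}$ (presumably simply $\{z=0\}$), so that Theorem \ref{CAT} cannot detect an isolated zero. I would then run the Lyapunov--Schmidt reduction exactly as set up before Theorem \ref{LSt1}: split $\bz=(u,v)$ along $\CZ$, read off the lower-right block $\Delta_u$ of $D\bg_\ell(\bz_u)$ and verify $\det\Delta_u\neq0$ on $\mathrm{cl}(V)$ (hypothesis $(i)$, which should reduce to a nonvanishing built into {\bf H}$_3$, such as $h_1 k_0-h_0 k_1\neq 0$), check that $f_1$ is not identically zero (hypothesis $(ii)$), and compute $\gamma_1,f_1$ and $\gamma_2,f_2$ to form $\CF^2(\al,\e)=\e f_1(\al)+\e^2 f_2(\al)$. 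I anticipate that the root $a_\e$ of $\CF^2$, whose leading term should be proportional to a power of $\mu_1$ times a nonzero constant, exists precisely when $\delta_3=\sqrt{\mu_1}(h_1 k_0-h_0 k_1)h_0\omega>0$ (hypothesis $(iii)$), and that the lower bound $|\partial_\al\CF^2(a_\e,\e)\cdot\al|\geq P_0|\e|^l|\al|$ holds with $l=2$ thanks to $2k_0\mu_1\nu_1^2-\al_3\omega^2\neq0$ (hypothesis $(iv)$): indeed I expect the zero of $f_1$ to be degenerate (not simple), which is exactly why $f_2$ — carrying the order-three data $\al_3$ and $\nu_1^2$ — is needed to restore transversality.

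Finally, Theorem \ref{LSt1} yields a branch $\bz(\e)$ of zeros of $g(\cdot,\e)$, i.e. a fixed point of the Poincar\'e map \eqref{quad}, hence a $2\pi$-periodic orbit of the reduced system; undoing the passage to cylindrical coordinates, the rescaling and the linear change of variables produces the periodic solution $\varphi(t,\e)$ of \eqref{gvd} with $\varphi(t,\e)\to(0,0,0)$ as $\e\to0$. The main obstacle is the bookkeeping: one must pin down the correct rescaling exponent $s$, identify which order $\ell$ gives the first nonzero averaged function and the precise shape of its zero set $\CZ$, and then push the rather heavy computation of $\gamma_1,\gamma_2,f_1,f_2$ far enough to recognize that the two nondegeneracy conditions in {\bf H}$_3$ are exactly hypotheses $(iii)$ and $(iv)$ of Theorem \ref{LSt1} — in particular to confirm that the zero of $f_1$ fails to be simple, which is the whole reason the classical Theorem \ref{CAT} is insufficient here.
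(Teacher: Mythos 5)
Your overall strategy (Jordan form, $\e$-rescaling, cylindrical coordinates, then higher-order averaging with the Lyapunov--Schmidt reduction of Theorem \ref{LSt1}) is the same as the paper's, but two of your key guesses are wrong, and they are precisely the points where the proof actually happens. First, the branch of non-isolated zeros you propose to work on is the wrong one. In the paper $\bg_1\equiv0$ identically, the first nonvanishing averaged function is $\bg_2(r,z)=\bigl(0,\tfrac{2\pi\nu_1(\mu_1\omega^2-h_0^2z^2)z}{\omega^3}\bigr)$, and its zero set splits into three continua $\CZ_0=\{z=0\}$ and $\CZ_\pm=\{z=\pm\sqrt{\mu_1}\,\omega/h_0\}$. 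The periodic solution is obtained from $\CZ_+$ (whence the $\sqrt{\mu_1}$ in $\delta_3$), and the paper explicitly notes that on $\CZ_0$ --- the branch you single out as ``presumably'' the relevant one --- the method is inconclusive. Second, your explanation of why $f_2$ is needed is incorrect. On $\CZ_+$ one finds $f_1(r)=\tfrac{\pi r(2k_0\mu_1\nu_1^2-\al_3\omega^2)}{\omega^3}$, which is linear in $r$ with nonzero slope by {\bf H}$_3$; its zero $r=0$ is simple, but it lies on the boundary of the admissible region $r>0$. The role of $f_2$ is not to ``restore transversality at a degenerate zero'' but to contribute a constant (in $r$) term proportional to $\sqrt{\mu_1}(h_0k_1-h_1k_0)$ that pushes the root of $\CF^2=\e f_1+\e^2 f_2$ to $a_\e=\CO(\e)>0$ exactly when $\delta_3>0$. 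Correspondingly, hypothesis $(iv)$ of Theorem \ref{LSt1} is verified with $l=1$ (the paper obtains $|\partial_\al\CF^2(a_\e,\e)|\geq\e P_0$), not $l=2$ as you assert; the condition $2k_0\mu_1\nu_1^2-\al_3\omega^2\neq0$ is what keeps the slope of $f_1$ away from zero.

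Beyond these two substantive errors, the proposal is a plan rather than a proof: the rescaling exponent is left undetermined (the paper takes $s=1$), the order $\ell=2$ of the first nonvanishing averaged function is not identified, and the computation of $\bg_3$, $\bg_4$, $\gamma_1$, $\gamma_2$, $f_1$, $f_2$ --- which is where all the nondegeneracy conditions of {\bf H}$_3$ are actually matched to the hypotheses of Theorem \ref{LSt1} --- is deferred. As it stands the argument would not close: following it literally on $\CZ_0$ would fail, and the transversality estimate with $l=2$ is not what the computation delivers.
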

\begin{proof}
We start by writing the linear part of system \eqref{gvd} in its Jordan normal form, so consider the linear change of variables
$$(x,y,z)=\left(\ov Z\dfrac{h_0}{\omega},\ov Y,\dfrac{h_0 \ov Z}{\omega}-\dfrac{\ov X \omega}{h_0} \right).$$
In addition, taking $(\ov  X, \ov  Y, \ov Z)=\e(X,Y,Z)$, we see that the unperturbed system (that is, the system with $\e=0$)  can be written as $\big(\dot X,\dot Y,\dot Z\big)=\big(- \omega Y,\omega X,0\big)$. Thus, taking cylindrical coordinates $(X,Y,Z)=(r\cos\theta,r\sin \theta,z)$, we see that $\dot \T=\omega+\CO(\e),$ which is positive for $|\e|$ sufficiently small. Therefore, by doing a time-rescaling, $\theta$ can be taken as the new time of the system so that  system \eqref{gvd} becomes the following non-autonomous differential system
\begin{equation}\label{s22}
\dfrac{d r}{d \theta}=\sum_{i=1}^4\e^i F_{i}^1(\theta,r,z)+\CO(\e^5),\quad
\dfrac{d z}{d\theta}=\sum_{i=1}^4\e^i F_{i}^2(\theta,r,z)+\CO(\e^5)
\end{equation}
 where $(\theta,r,z)\in\R\times\R_+\times \R$. Due to the extension of the expressions of $F_i^j(\theta,r,z)$, $i=1,\dots,4$ and $j=1,2$, we shall omit them here. However, they are trivially computed in terms of the parameters $\omega$, $h_i,\al_i,\mu_i, k_i,\beta_i$, and $\nu_i.$

Observe that the non-autonomous differential system \eqref{s22} is  written in the standard form \eqref{tm13} for applying the averaging theorem. Thus, identifying
\[
t=\theta, \ T=2\pi, \ \bz=(r,z),  \text{ and } \bF_{i}(\theta,r,z)=\left( F_{i}^1(\theta,r,z), F_{i}^2(\theta,r,z)\right),
\]
we compute the first-order averaged function \eqref{avf} obtaining $\bg_1(r,z)=\left(0,0\right).$  The second averaged function is
\begin{equation*}
\bg_2(r,z)=\left(0,\frac{2 \pi  \nu_1 \left(\mu_1 \omega ^2 -h_0^2 z^2\right)z}{\omega ^3}\right),
\end{equation*}
for $(r,z)\in\R_+\times\R.$ This function has three continuum of zeros, namely
$$
\CZ_0=\left\lbrace
\bz_{r_0}=\left(r,0\right):r>0
\right\rbrace \quad \mbox{and} \quad \CZ_\pm=\left\lbrace
\bz_{r_\pm}=\left(r,\pm\frac{\sqrt{\mu_1} \omega }{h_0}\right):r>0
\right\rbrace.
$$
Moreover, the Jacobian matrix of $\bg_2$ can be written as
$$
D{\bg_2}(r,z)=\left(
\begin{array}{cc}
 0 & 0 \\
 0 & \frac{2 \pi  \nu_1 }{\omega ^3}\left(\mu_1 \omega ^2-3 h_0^2 z^2\right) \\
\end{array}
\right).
$$
In order to apply Theorem \ref{LSt1}, we compute the third and fourth order averaged functions using the formulas  \eqref{avf}, obtaining $\bg_3(r,z)=\big(g^1_3(r,z),g^2_3(r,z)\big)$ and $\bg_4(r,z)=\big(g^1_4(r,z),g^2_4(r,z)\big)$, as
\begin{align*}
g_3^1(r,z)=&\frac{\pi}{\omega ^5} \Big(h_0^2 \nu_1 z^2 (2 h_0 k_1 z-2 h_1 k_0 z+3 k_0 \nu_1 r)-\mu_1 \nu_1 \omega ^2 (2 h_0 k_1 z-2 h_1 k_0 z\\
&+k_0 \nu_1 r)-\al_3 r \omega ^4\Big),
\end{align*}

\begin{align*}
g_3^2(r,z)=&\frac{\pi }{h_0 \omega ^5} \Big(h_0^3 \nu_1 z^3 (\beta_1 h_0-3 k_0 \nu_1)+h_0 \omega ^2 z \big(-2 h_0^2 \nu_2 z^2+h_0 \nu_1 \big(h_1 z^2-\beta_1 \mu_1\\
&-6 \nu_1 r z\big)+3 k_0 \mu_1 \nu_1^2\big)+\omega ^4 \big(2 h_0 z (\mu_1 \nu_2+\mu_2 \nu_1)-h_1 \mu_1 \nu_1 z+2 \mu_1 \nu_1^2 r\big)\Big),
\end{align*}
\begin{align*}
g_4^1(r,z)=&\frac{\pi}{2 h_0\omega ^7}  \Big(h_0^3 \nu_1 z^2 \big(-3 k_0 \nu_1 (-4 h_0 k_1 z+3 \beta_1 h_0 r+4 h_1 k_0 z)+2 \beta_1 h_0 z (h_1 k_0\\
&-h_0 k_1)+9 k_0^2 \nu_1^2 r\big)+h_0 \omega ^2 \big(4 h_0^2 \nu_2 z^2 (h_0 k_1 z-h_1 k_0 z+3 k_0 \nu_1 r)+\nu_1\\
&.\big(2 h_0 z^3 \big(-3 h_0 h_1 k_1+2 h_0 (h_0 k_2-h_2 k_0)+3 h_1^2 k_0\big)+3 h_0 \nu_1 r z^2 (6 h_0 k_1\\
&-7 h_1 k_0)+k_0 \mu_1 \nu_1 (-8 h_0 k_1 z+3 \beta_1 h_0 r+8 h_1 k_0 z)+2 \beta_1 h_0 \mu_1 z (h_0 k_1\\
&-h_1 k_0)-3 k_0 \nu_1^2 r (k_0 \mu_1-4 h_0 r z)\big)\big)-\omega ^4 \big(h_0^2 (2 k_1 z (\al_3+2 \mu_1 \nu_2\\
&+2 \mu_2 \nu_1)+4 k_2 \mu_1 \nu_1 z+\al_3 \beta_1 (-r))+h_0 (\nu_1 (-4 h_2 k_0 \mu_1 z+k_0 r (\al_3\\
&+4 \mu_1 \nu_2+2 \mu_2 \nu_1)+6 k_1 \mu_1 \nu_1 r)-2 h_1 z (k_0 (\al_3+2 \mu_1 \nu_2+2 \mu_2 \nu_1)\\
&+3 k_1 \mu_1 \nu_1))+h_1 k_0 \mu_1 \nu_1 (6 h_1 z-7 \nu_1 r)\big)+r \omega ^6 (\al_3 h_1-2 \al_4 h_0)\Big),
\end{align*}
\begin{align*}
g_4^2(r,z)=&\frac{\pi}{4 h_0^2 \omega ^7}  \Big(24 \pi  h_0^6 \nu_1^2 \omega  z^5-3 h_0^4 \nu_1 z^3 (\beta_1 h_0-k_0 \nu_1) (\beta_1 h_0-5 k_0 \nu_1)-32 \pi  h_0^4\\
& .\mu_1 \nu_1^2 \omega ^3 z^3+\omega ^6 \big(8 h_0^2 z (\mu_1 \nu_3+\mu_2 \nu_2+\mu_3 \nu_1)-4 h_0 z (h_1 \mu_1 \nu_2+h_1 \mu_2 \nu_1\\
&+h_2 \mu_1 \nu_1)+4 h_0 \nu_1 r (\al_3+4 \mu_1 \nu_2+2 \mu_2 \nu_1)+h_1 \mu_1 \nu_1 (3 h_1 z-4 \nu_1 r)\big)\\
&+8 \pi  h_0^2 \mu_1^2 \nu_1^2 \omega ^5 z+h_0 \omega ^4 \big(-8 h_0^3 \nu_3 z^3+4 h_0^2 z \big(-\nu_1 (\beta_1 \mu_2+\beta_2 \mu_1)\\
&+h_1 \nu_2 z^2+h_2 \nu_1 z^2-\nu_2 (\beta_1 \mu_1+12 \nu_1 r z)\big)+h_0 \nu_1 \big(-3 h_1^2 z^3+2 h_1 z\\
&. (\beta_1 \mu_1+6 \nu_1 r z)+4 \nu_1 (3 k_0 \mu_2 z+5 k_1 \mu_1 z-3 r (\beta_1 \mu_1+3 \nu_1 r z))+24 k_0\\
&. \mu_1 \nu_2 z\big)+2 k_0 \mu_1 \nu_1^2 (12 \nu_1 r-13 h_1 z)\big)+h_0^2 \omega ^2 z \big(4 h_0^3 z^2 (\beta_1 \nu_2+\beta_2 \nu_1)\\
&+h_0^2 \nu_1 \big(3 \beta_1^2 \mu_1-2 \beta_1  z^2-12 z^2 (2 k_0 \nu_2+3 k_1 \nu_1)+36 \beta_1 \nu_1 r z\big)-6 h_0 k_0 \\
&.\nu_1^2 \big(3 \beta_1 \mu_1-7 h_1 z^2+12 \nu_1 r z\big)+15 k_0^2 \mu_1 \nu_1^3\big)\Big).
\end{align*}

We are going to find a periodic solution bifurcating from
$$
\CZ_+=\left\lbrace
\bz_{r_+}=\left(r,\frac{\sqrt{\mu_1} \omega }{h_0}\right):r>0
\right\rbrace.
$$
Thus we use $\bz_{r_+}$ to compute the coefficients of the bifurcation  function \eqref{cF} obtaining
\begin{align*}
f_1(r)=&\frac{\pi  r \big(2 k_0\mu_1 \nu_1^2-\al_3 \omega ^2\big)}{\omega ^3},\\
f_2(r)=&\frac{\pi  r}{2 h_0\omega ^5} \Big(\omega ^2 \big(\al_3 \beta_1 ^2+h_0\nu_1 (k_0(-\al_3+8 \mu_1 \nu_2+4 \mu_2 \nu_1)+4 k_1 \mu_1 \nu_1)\\
&-6 h_1 k_0\mu_1 \nu_1^2\big)+\omega ^4 (\al_3 h_1-2 \al_4 h_0)+6 h_0k_0\mu_1 \nu_1^2 (k_0\nu_1-\beta_1 h_0)\Big)\\
&+\frac{\pi  \sqrt{\mu_1} (h_0k_1-h_1 k_0) \big(2 k_0\mu_1 \nu_1^2-\al_3 \omega ^2\big)}{h_0\omega ^4},
\end{align*}
having $ \Delta_r=\dfrac{4 \pi  \mu_1 \nu_1}{\omega }.$ Now we compute the zeroes of the function
\begin{equation*}
\CF^2(r)=\e f_1(r)+\e^2 f_2(r),
\end{equation*}
and we obtain
$$
a_\e=\dfrac{-2   \omega\,\e  (h_1 k_0-h_0 k_1) \left(2 k_0 \mu_1 \nu_1^2-\al_3 \omega ^2\right)\sqrt{\mu_1}}{h_0 \left(2 \al_3 \omega ^4-4 k_0 \mu_1 \nu_1^2 \omega ^2\right)+\e \rho(h_0,h_1,k_0,k_1,\nu_1,\nu_2,\mu_1,\mu_2,\al_3,\al_4,\omega)},
$$
where
\begin{align*}
&\rho(h_0,h_1,k_0,k_1,\nu_1,\nu_2,\mu_1,\mu_2,\al_3,\al_4,\omega)=\omega ^4 (2 \al_4 h_0-\al_3 h_1)+6 h_0 k_0 \mu_1 \nu_1^2 (\beta_1 h_0\\
&-k_0 \nu_1)  -\omega ^2 \big(\al_3 \beta_1 h_0^2+h_0 \nu_1 (k_0 (8 \mu_1 \nu_2-\al_3+4 \mu_2 \nu_1)+4 k_1 \mu_1 \nu_1)-6 h_1 k_0 \mu_1 \nu_1^2\big).
\end{align*}
Then, under these conditionsthe hypothesis $(i)$, $(ii)$ and $(iii)$ of Theorem \ref{LSt1} hold. The hypothesis $(iv)$ also holds since we have
\begin{align*}
|\partial_\al\CF^2(a_\e,\e)|&\geq \e \Bigg|\left|\frac{\pi  \left(2 k_0 \mu_1 \nu^2-\al_3 \omega ^2\right)}{\omega ^3}\right|-\e \Big|\kappa(h_0,h_1,k_0,k_1,\mu_1,\mu_2,\nu_1,\nu_2,\al_4,\omega)\Big|\Bigg|
\end{align*}
where $\kappa(h_0,h_1,k_0,k_1,\mu_1,\mu_2,\nu_1,\nu_2,\al_4,\omega)=$
\begin{align*}
& \frac{\pi }{2 h_0 \omega ^5}  \Big(\omega ^2 \big(\al_3 \beta_1 h_0^2+h_0 \nu_1 (k_0 (8 \mu_1 \nu_2-\al_3+4 \mu_2 \nu_1)+4 k_1 \mu_1 \nu_1)-6 h_1 k_0 \mu_1 \nu_1^2\big)\\
&+\omega ^4 (\al_3 h_1-2 \al_4 h_0)+6 h_0 k_0 \mu_1 \nu_1^2 (k_0 \nu_1-\beta_1 h_0)\Big).
\end{align*}
Thus for $\e>0$ sufficiently small we can find $P_0>0$ satisfying
$$
|\partial_\al\CF^2(a_\e,\e)|\geq \e\, P_0.
$$
The proof of the proposition follows from applying Theorem \ref{LSt1} and going back through the changes of coordinates. The same result will be obtained in $\CZ_-$, although both periodic orbits cannot coexist.  For $\CZ_0$ the result is not conclusive.
\end{proof}

The next result will be obtained by direct  analysis of the Poincar\'e map of the system \eqref{gvd} under the hypothesis {\bf H}$_4$.

\begin{proposition}\label{p5}Assume that system \eqref{gvd} in coefficients \eqref{coefs} satisfies the hypothesis {\bf H}$_4$. Then, for $|\e|\neq 0$ sufficiently small,  system \eqref{gvd}  has a periodic solution $\varphi(t,\e)$ satisfying $\varphi(t,\e)\to (0,0,0)$ when $\e\to 0$.
\end{proposition}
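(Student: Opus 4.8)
The plan is to reduce \eqref{gvd} to the standard form \eqref{tm13}, observe that under {\bf H}$_4$ the first averaged function vanishes along a whole line (so Theorem \ref{CAT} is inapplicable), and then produce an isolated fixed point of the Poincar\'e map by a direct, order-by-order analysis.

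First I would put the linear part of \eqref{gvd} in real Jordan normal form by a linear change $(x,y,z)\mapsto(X,Y,Z)$, so that the unperturbed system reads $(\dot X,\dot Y,\dot Z)=(-\omega Y,\omega X,0)$; this is possible because under {\bf H}$_4$ the origin is a zero--Hopf point with spectrum $\{0,\pm i\omega\}$. Since $\mu_0=\al_0=h_0=0$, the only $O(1)$ term surviving this change is the cubic $-\nu_0x^3$, so I would rescale $(\ov X,\ov Y,\ov Z)=\sqrt{\e}\,(X,Y,Z)$ to balance it against the $O(\e)$ linear part; the system then becomes an $O(\e)$-perturbation of the harmonic oscillator. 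Passing to cylindrical coordinates $(X,Y,Z)=(r\cos\theta,r\sin\theta,z)$, one has $\dot\theta=\omega+O(\e)$, so $\theta$ can be taken as the new time and \eqref{gvd} becomes a $2\pi$-periodic system in standard form with components $dr/d\theta=\e F_1^1+\e^2F_2^1+O(\e^3)$ and $dz/d\theta=\e F_1^2+\e^2F_2^2+O(\e^3)$.

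I would then compute the first averaged function $\bg_1=(\bg_1^1,\bg_1^2)$ from \eqref{avf}. The decisive point is $\bg_1^2\equiv 0$: at order $\e$ the $z$-equation of \eqref{gvd} contributes no term proportional to the slow variable, so $F_1^2$ is a combination of $\cos\theta$, $\cos^3\theta$ and $\sin\theta$, each of zero mean. On the other hand $\bg_1^1(r)=\tfrac{\pi r}{\omega}\bigl(\tfrac34\nu_0^3r^2-(\mu_1\nu_0-\al_1)\bigr)$ does not depend on $z$, and $\delta_4=\nu_0(\mu_1\nu_0-\al_1)>0$ ensures it has a positive simple zero $r_0=\tfrac{2}{\nu_0^2}\sqrt{\delta_4/3}$. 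Hence the zero set of $\bg_1$ in $\{r>0\}$ is the whole line $\CZ=\{(r_0,z):z\in\R\}$, so Theorem \ref{CAT} cannot be applied.

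To conclude I would analyze the Poincar\'e map $\Pi(\bz,\e)=\bz+\e\bg_1(\bz)+\e^2\bg_2(\bz)+O(\e^3)$ directly. The fixed-point equation $\Pi(\bz,\e)=\bz$ splits into $\bg_1^1(r)+\e\bg_2^1(r,z)+O(\e^2)=0$ and $\bg_2^2(r,z)+\e\bg_3^2(r,z)+O(\e^2)=0$. Since $(\bg_1^1)'(r_0)=2\pi\delta_4/(\omega\nu_0)\neq 0$, the implicit function theorem solves the first equation for $r=r(z,\e)=r_0+O(\e)$, uniformly on compact $z$-intervals, and substituting into the second reduces everything to the scalar equation $\bg_2^2(r_0,z)+O(\e)=0$. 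A computation of $\bg_2^2$ — requiring the second-order term $F_2^2$ (including the contribution of the $O(\e)$ part of $\dot\theta$) and the function $\by_1$ — shows that $\bg_2^2(r_0,z)$ is affine in $z$, with leading coefficient $\partial_z\bg_2^2(r_0)=\tfrac{2\pi h_1\beta_0(\mu_1\nu_0-2\al_1)}{\omega^3}$. Because $2\al_1^2-3\al_1\mu_1\nu_0+\mu_1^2\nu_0^2=(\al_1-\mu_1\nu_0)(2\al_1-\mu_1\nu_0)$, the hypothesis $\beta_0h_1(2\al_1^2-3\al_1\mu_1\nu_0+\mu_1^2\nu_0^2)\neq 0$ together with $\delta_4>0$ makes this coefficient nonzero, so $z\mapsto\bg_2^2(r_0,z)$ has a unique simple zero $z_0$; a final application of the implicit function theorem gives $z(\e)=z_0+O(\e)$. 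Therefore $\bz(\e)=(r_0,z_0)+O(\e)$ is a fixed point of $\Pi$, i.e.\ a $2\pi$-periodic solution of the standard-form system, and undoing the cylindrical and linear changes and the $\sqrt{\e}$-rescaling produces a periodic solution $\varphi(t,\e)$ of \eqref{gvd} with $\varphi(t,\e)=O(\sqrt{\e})\to(0,0,0)$. I expect the main obstacle to be precisely the evaluation of $\bg_2^2(r_0,z)$: both sources of its $z$-dependence (through $\by_1^1$, since $F_1^1$ carries a term $\propto h_1z$, and through the $z$-term inside the $O(\e)$ part of $\dot\theta$ that enters $F_2^2$) must be kept, since only their sum yields the factor $\mu_1\nu_0-2\al_1$ matching {\bf H}$_4$; the remainder is routine implicit-function-theorem bookkeeping.
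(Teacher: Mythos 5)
Your proposal is correct and follows essentially the same route as the paper: after the same normal-form reduction and $\sqrt{\e}$-rescaling, the first averaged function vanishes on the line $\{r=r_0\}$, and the degeneracy in $z$ is resolved by the second averaged function, whose $z$-slope at $r_0$ is $\tfrac{2\pi\beta_0 h_1(2\al_1-\mu_1\nu_0)}{\omega^3}$, so that the product with $(\bg_1^1)'(r_0)$ reproduces exactly the paper's nondegeneracy quantity $\beta_0 h_1(2\al_1^2-3\al_1\mu_1\nu_0+\mu_1^2\nu_0^2)$. The only (immaterial) difference is bookkeeping: you apply the implicit function theorem twice sequentially, whereas the paper substitutes $(r,z)\mapsto(r_0+\e r,z)$ into the displacement map and applies it once to the resulting coefficient function $H_1$.
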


\begin{proof}
Using the linear change of variables
$$
(x,y,z)=\left(\ov X, -\dfrac{\ov Y \omega}{\nu_0}, \dfrac{\ov X \beta_0}{\nu_0}-\dfrac{\ov Z \nu_0}{\omega} \right),
$$
and taking $(\ov  X, \ov  Y, \ov Z)=\sqrt{\e}(X,Y,Z)$, the unperturbed part of system \eqref{gvd} becomes $\big(\dot X,\dot Y,\dot Z\big)=\big(- \omega Y,\omega X,0\big)$. Thus, we use cylindrical coordinates $(X,Y,Z)=(r\cos\theta,r\sin \theta,z)$, and $\dot \T=\omega+\CO(\e),$ for taking  $\theta$  as the new time of the system so that  system \eqref{gvd} becomes the following non-autonomous differential system
\begin{equation*}
\dfrac{d r}{d \theta}=\sum_{i=1}^2\e^i F_{i}^1(\theta,r,z)+\CO(\e^3),\quad
\dfrac{d z}{d\theta}=\sum_{i=1}^2\e^i F_{i}^2(\theta,r,z)+\CO(\e^3),
\end{equation*}
 where $(\theta,r,z)\in\R\times\R_+\times \R$. Again we omit the expressions of $F_i^j(\theta,r,z)$, $i=1,2$ because they are very large.

The non-autonomous differential system is in the standard form \eqref{tm13}. Thus, identifying
\[
t=\theta, \ T=2\pi, \ \bz=(r,z),  \text{ and } \bF_{i}(\theta,r,z)=\left( F_{i}^1(\theta,r,z), F_{i}^2(\theta,r,z)\right) \text{ for } i=1,2,
\]
we compute the first-order averaged function \eqref{avf} obtaining
$$
\bg_1(r,z)=\left(-\frac{3 \nu_0 r^3}{8 \omega }-\frac{r (\al_1-\mu_1 \nu_0)}{2 \omega },0\right).
$$
for $(r,z)\in\R_+\times\R.$ This function has a continuum of zeroes with positive $r$, namely
$$
\CZ_+=\left\lbrace
\bz=\left(2 \sqrt{\frac{\mu_1\nu_0-\al_1}{3 \nu_0}},z\right):z\in \R
\right\rbrace.
$$
Moreover, the Jacobian matrix of $\bg_1$ can be written as
$$
D{\bg_1}(r,z)=\left(
\begin{array}{cc}
-\frac{1}{8 \omega }\big(4 \al_1-4 \mu_1 \nu_0+9 \nu_0 r^2\big) & 0 \\
 0 & 0 \\
\end{array}
\right).
$$
We are going to find a fixed point for the Poincar\'e map
$$
\Pi(\bz,\e)=\bz+\e\bg_1(\bz) +\CO(\e^{2}).
$$
As the first averaged function has no simple zero, we compute the second order averaged function $\bg_2(r,z)=\big(g^1_2(r,z),g^2_2(r,z)\big)$ where
\begin{align*}
g_2^1(r,z)=&\frac{\pi }{32 \nu_0 \omega ^4} \big(4 \beta_0 h_1 \nu_0 r \omega  \big(4 \al_1-12 \mu_1\nu_0+9 \nu_0 r^2\big)+8 h_1 \nu_0^3 z \big(4 \al_1-4 \mu_1\nu_0\\
&+9 \nu_0 r^2\big)+r \omega  \big(\omega  \big(4 \omega  \big(4 \al_1 \nu_1+\nu_0 \big(-8\al_2+4 \mu_1\nu_1+8 \mu_2 \nu_0-3 \nu_1 r^2\big)\big)\\
&+\pi  \nu_0 \big(4 \al_1-4 \mu_1\nu_0+3 \nu_0 r^2\big) \big(4 \al_1-4 \mu_1\nu_0+9 \nu_0 r^2\big)\big)-4 \text{k1} \nu_0^2 \big(4 \al_1\\
&-4 \mu_1\nu_0+3 \nu_0 r^2\big)\big)\big),\\
g_2^2(r,z)=&\frac{\pi  \beta_0 h_1 \nu_0 z \big(2 \mu_1-3 r^2\big)}{\omega ^3}+\frac{\pi  r (\beta_1 \nu_0-\beta_0 \nu_1) \big(4 \al_1-4 \mu_1\nu_0+3 \nu_0 r^2\big)}{4 \nu_0^3}.
\end{align*}
Thus, we use the Poincar\'e map  to define the displacement map
\begin{align*}
d((r,z),\e)=&\Pi((r,z),\e)-(r,z)\\
&=\e\bg_1\big(r,z\big)+\e^2\bg_2\big(r,z\big) +\CO(\e^{3}).
\end{align*}
We consider now the function $\xi \colon \R^2\times(-\e_0,\e_0)\mapsto \R^2$,
$$
\xi((r,z),\e)=\left(2 \sqrt{\frac{\mu_1\nu_0-\al_1}{3 \nu_0}},z\right)+\e(r,0),
$$
and composing it with the displacement map we obtain
\begin{equation*}
\begin{split}
H\big((r,z),\e\big)=&d(\xi(r,z),\e)=\e H_1(r,z)+\CO(\e^2)
\end{split}
\end{equation*}
with the first coefficient function $H_1(r,z)=\left(H^1_1(r,z),H^2_1(r,z)\right)$ satisfying
\begin{equation*}
\begin{split}
H^1_1(r,z)=&\frac{1}{3 \nu_0^{3/2} \omega ^4}\Big(\omega ^3 \big(2 \pi  \sqrt{3 \mu_1 \nu_0-3 \al_1} \big(\al_1 \nu_1-\al_2\nu_0+\mu_2 \nu_0^2\big)+3 \nu_0^{3/2} r \\
.&(\al_1-\mu_1 \nu_0)\big)-2 \pi  h_1\nu_0 \big(\al_1 \beta_0 \omega  \sqrt{3 \mu_1 \nu_0-3 \al_1}+3 \nu_0^{5/2} z (\al_1-\mu_1 \nu_0)\big)\Big),\\
H^2_1(r,z)=&\frac{2 \pi  \beta_0 h_1z (2 \al_1-\mu_1 \nu_0)}{\omega ^3}.
\end{split}
\end{equation*}
Obviously, a zero of $H\big((r,z),\e\big)$ is a fixed point of the Poincar\'e map $\Pi((r,z),\e)$. Then since the coefficient function $H^1_1(r,z)$ has the simple zero
$$
(\tilde r,\tilde z)=\Bigg(\frac{2 \pi  \left(\omega ^2 (\al_1 \nu_1-\nu_0 (\al_1-\mu_2 \nu_0))-\al_1 \beta_0 h_1 \nu_0\right)}{ \omega ^2 \sqrt{3\nu_0^3( \mu_1 \nu_0- \al_1)}},0 \Bigg),
$$
where
$$
\det(DH_1(\tilde r,\tilde z))=\frac{2 \pi  \beta_0 h_1 \left(2 \al_1^2-3 \al_1 \mu_1 \nu_0+\mu_1^2 \nu_0^2\right)}{\omega ^4},
$$
it follows from the Implicit Function Theorem that $H\big((r,z),\e\big)$ has a branch of zeroes $(\tilde r(\e),\tilde z(\e))$ satisfying $ (\tilde r(0),\tilde z(0))=(\tilde r,\tilde z).$
Consequently, the Poincar\'e map has a fixed point such that
$$
\big(r(\e),z(\e)\big)=\left(2 \sqrt{\frac{\mu_1\nu_0-\al_1}{3 \nu_0}}+\e\tilde r,0\right)+\big(\CO(\e^2),\CO(\e)\big).
$$
The proof of the proposition follows from going back through the changes of variables.
\end{proof}

\section{Numerical Examples}\label{sec:ex}
In this section we present a numerical example for each result stated in Section \ref{sec1}.
\subsection{Example 1} Consider system \eqref{gvd}  with the coefficients \eqref{coefs} satisfying the following relations
\begin{align*}
h(\e)&= 1+\dfrac{\e}{25} + \e^2\dfrac{3}{25}, &
k(\e)&=\frac{67}{50}+\dfrac{\e}{25} + \e^2\dfrac{3}{50}, \nonumber\\
\alpha(\e)&=-\e + \dfrac{\e^2}{25}, &
\beta(\e)&=\frac{117}{50}+\dfrac{\e}{25} + \e^2\dfrac{3}{50},\\
\mu(\e)&=\e\frac{20029}{5025} + \e^2\dfrac{3}{50}, &
\nu(\e)&=1+\dfrac{\e}{25} + \e^2\dfrac{3}{50}.\nonumber
\end{align*}
Take $\e=\frac{1}{70},$  and we observe that $\ell_{1}= -\frac{6403 \pi }{1040}.$ As stated in  Theorem \ref{tvd1}, Figure \ref{toros} shows  the existence of three periodic solutions and two invariant tori coexisting near the origin of coordinates of system \eqref{gvd}.
\begin{figure}
	\begin{overpic}[width=12cm]{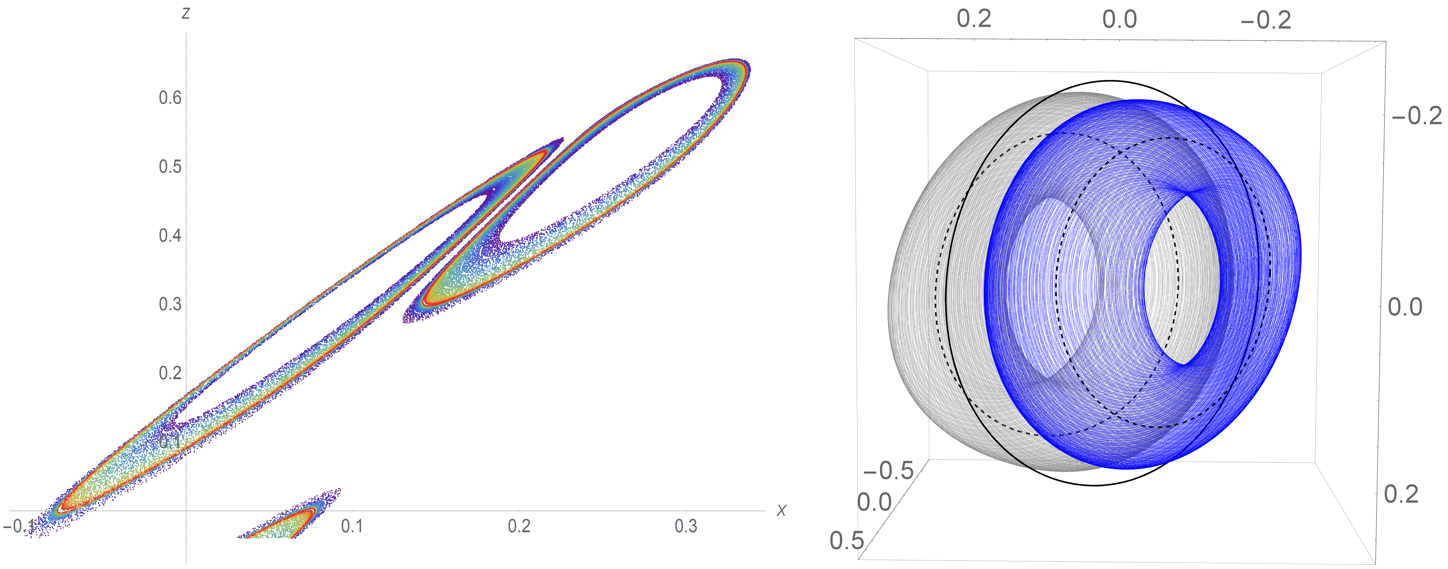}
	\end{overpic}
	\caption{On the left we see the Poincar\'e map of system \eqref{gvd} showing the attracting behaviour near the two invariant tori. On the right, we can see the trajectories starting on $p_\pm=(\pm 0.198, 0, \pm 0.490)$ emulation the two invariant tori. The two symmetric unstable periodic solution are represented by the  dashed curves. The central periodic solution, which has a saddle type of stability, is represented by the continuous curve.}
	\label{toros}	
\end{figure}
\subsection{Example 2} Consider system \eqref{gvd}  with the coefficients \eqref{coefs} satisfying the following relations
\begin{align*}
h(\e)&= -\e + 3\,\e^2, &
k(\e)&=-\frac{2393}{1184}+3\,\e + 3\,\e^2, \nonumber\\
\alpha(\e)&=-\frac{37}{32}-\e + 3\,\e^2, &
\beta(\e)&=-1+3\,\e + 3\,\e^2,\\
\mu(\e)&=-1-2\e + 3\,\e^2, &
\nu(\e)&=\frac{37}{32}-4\e +3\,\e^2.\nonumber
\end{align*}
Take $\e=\dfrac{1}{25}.$  In this case system \eqref{gvd} satisfies the hypothesis {\bf H}$_1$. Thus, as stated in  Theorem \ref{tvd2},  Figure \ref{fam1} shows  the existence of a periodic solutions near the origin of coordinates of system \eqref{gvd}.
\begin{figure}[h!]\centering
	\begin{overpic}[width=6cm]{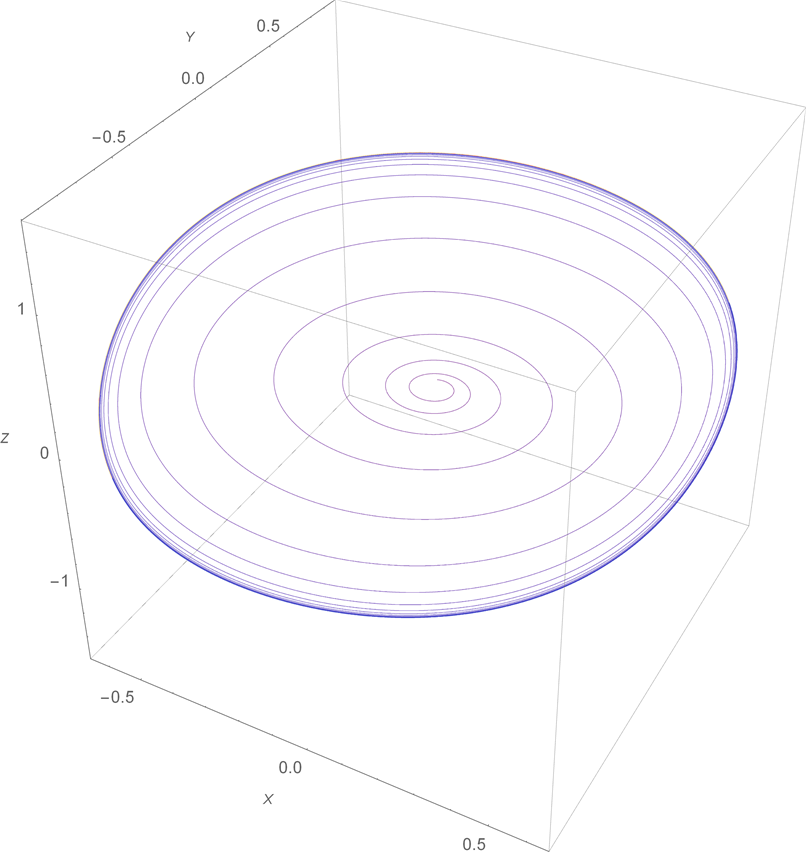}
	\end{overpic}
	\caption{This figure depicts the trajectory starting on $p_1=(0, 0.025, 0.08)$ being attracted by the periodic solution predicted by Theorem \ref{tvd2}.}
	\label{fam1}	
\end{figure}
\subsection{Example 3} Consider system \eqref{gvd}  with the coefficients \eqref{coefs} satisfying the following relations
\begin{align*}
h(\e)&= -1-\e\frac{171}{10} - \e^2\frac{103}{5}, &
k(\e)&=-\frac{2393}{1184}+\e\frac{187}{10}+ 3\e^2, \nonumber\\
\alpha(\e)&=-\frac{37}{32}-\e + \e^2\frac{29}{5}, &
\beta(\e)&=-\e +  \e^2\frac{1}{10},\\
\mu(\e)&=-1-2\,\e - \e^2\frac{221}{10}, &
\nu(\e)&=\frac{37}{32}-4\,\e +\e^2\frac{37}{32}.\nonumber
\end{align*}
Take $\e=\dfrac{1}{25}.$  In this case system \eqref{gvd} satisfies the hypothesis {\bf H}$_2$. Thus, as stated in  Theorem \ref{tvd2},  Figure \ref{fam2} shows  the existence of a periodic solutions near the origin of coordinates of system \eqref{gvd}.
\begin{figure}[h!]\centering
	\begin{overpic}[width=6cm]{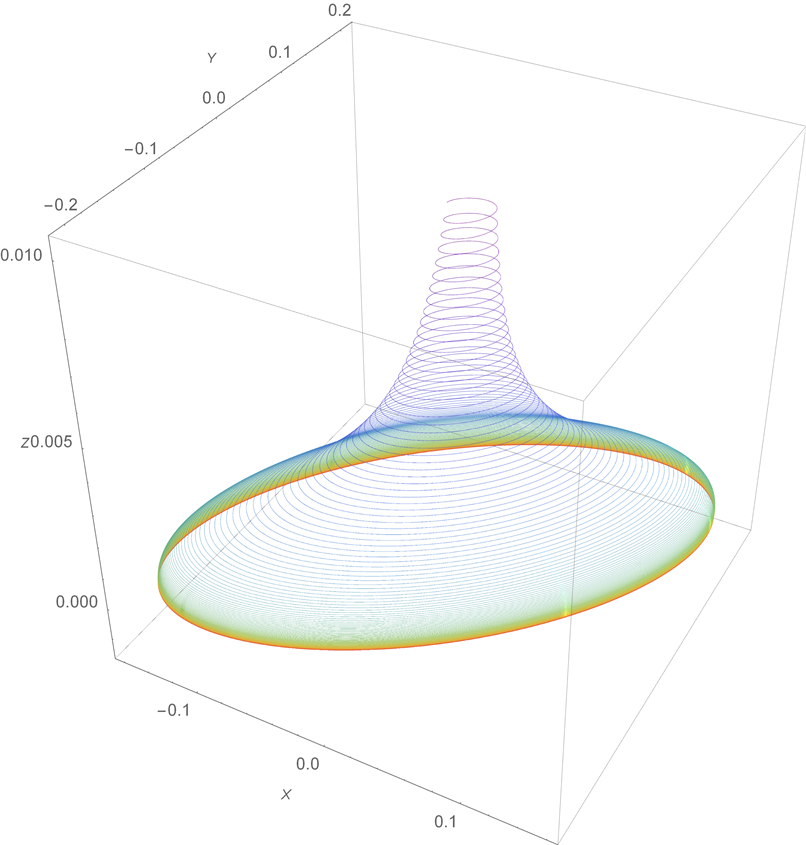}
	\end{overpic}
	\caption{This figure depicts the trajectory starting on $p_2=(0, 0, -0.01)$ being attracted by the periodic solution predicted by Theorem \ref{tvd2}.}
	\label{fam2}	
\end{figure}
\subsection{Example 4} Consider system \eqref{gvd}  with the coefficients \eqref{coefs} satisfying the following relations
\begin{align*}
h(\e)&=1 + \e^2\frac{1}{50}, &
k(\e)&=-1-128\,\e+ \e^2\frac{1}{50}, \nonumber\\
\alpha(\e)&=-25\,\e^3, &
\beta(\e)&=1 + \e^2\frac{1}{50},\\
\mu(\e)&=\e+\e^2\frac{4065}{64}, &
\nu(\e)&=128\,\e -\e^2.\nonumber
\end{align*}
Take $\e=\dfrac{1}{150}.$  In this case system \eqref{gvd} satisfies the hypothesis {\bf H}$_3$. Thus, as stated in  Theorem \ref{tvd2},  Figure \ref{fam4} shows  the existence of a periodic solution near the origin of coordinates of system \eqref{gvd}.
\begin{figure}[h!]\centering
	\begin{overpic}[width=6cm]{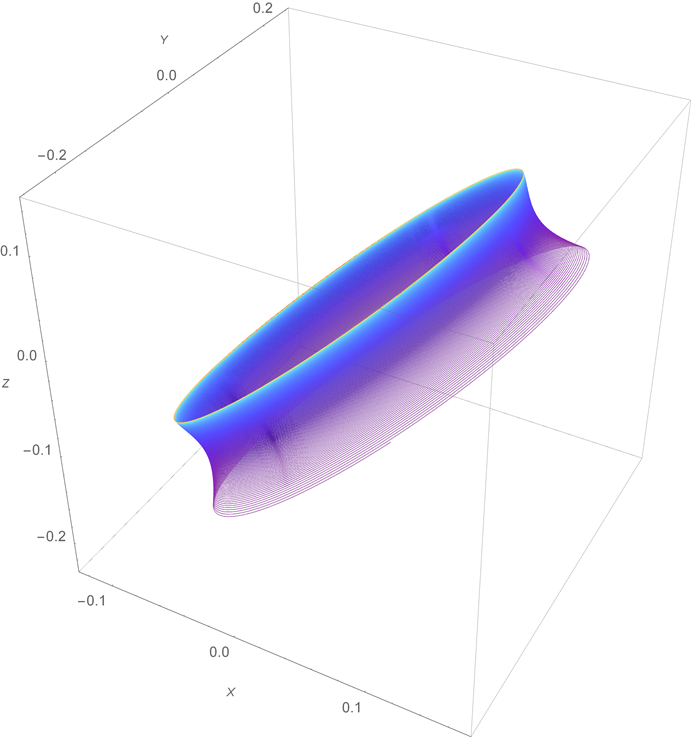}
	\end{overpic}
	\caption{This figure depicts the trajectory starting on $p_3=(0.1, -0.2, 0)$ being attracted by the periodic solution predicted by Theorem \ref{tvd2}.}
	\label{fam4}	
\end{figure}

\subsection{Example 5} Consider system \eqref{gvd}  with the coefficients \eqref{coefs} satisfying the following relations
\begin{align*}
h(\e)&= -\e + \e^2\frac{43}{5}, &
k(\e)&=-\frac{16}{5}-\e\frac{54}{5}+ \e^2\frac{8}{5}, \nonumber\\
\alpha(\e)&=-\e, &
\beta(\e)&=-1-\e\frac{14}{5} +\e^2\frac{1}{5},\\
\mu(\e)&=-\e , &
\nu(\e)&=\frac{5}{16}-\e -\e^2\frac{46}{5}.\nonumber
\end{align*}
Here, we take $\e=\dfrac{1}{150}.$  In this case system \eqref{gvd} satisfies the hypothesis {\bf H}$_4$. Thus, as stated in  Theorem \ref{tvd2},  Figure \ref{fam5} shows  the existence of a periodic solution near the origin of coordinates of system \eqref{gvd}.
\begin{figure}[h!]\centering
	\begin{overpic}[width=6cm]{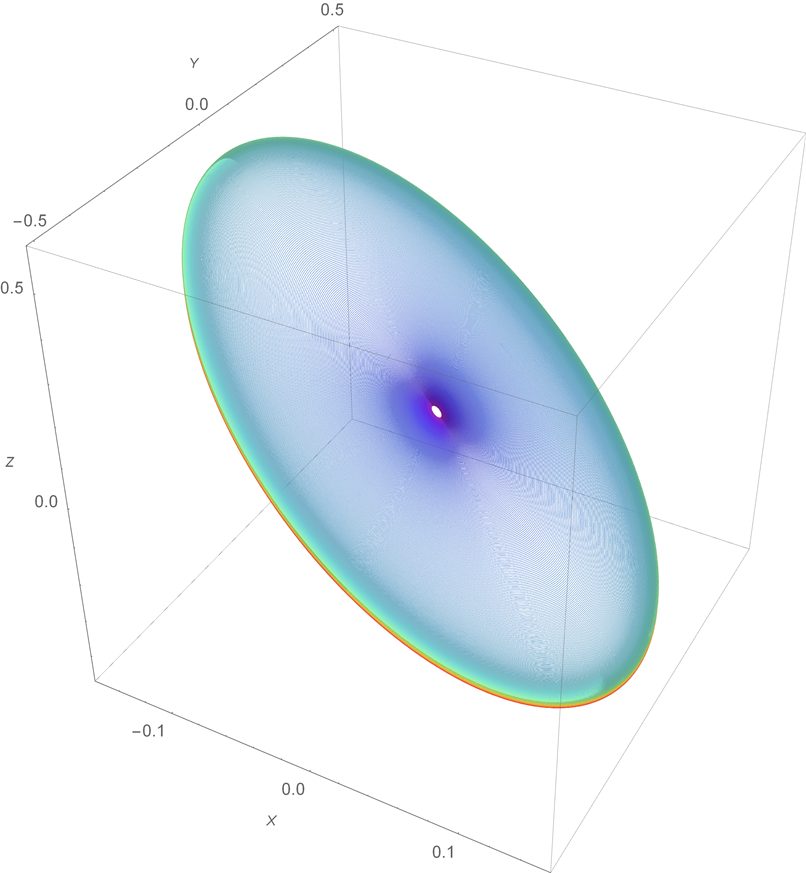}
	\end{overpic}
	\caption{This figure depicts the trajectory starting on $p_4=(0,0.1, 0.08)$ being attracted by the periodic solution predicted by Theorem \ref{tvd2}.}
	\label{fam5}	
\end{figure}

\section*{Acknowledgements}
This work started when MRC was doing a research visit to the  Instituto Superior T\'{e}cnico - Universidade de Lisboa, supported by the grant Becas Iberoam\'{e}rica, Santander Investigaci\'{o}n. MRC is gratefully indebted to Prof. C. Valls for her supervision during this period the for the fruitful collaboration. MRC is partially supported by FAPESP grants 2018/07344-0 and 2019/05657-4. CV is partially supported by FCT/Portugal through UID/MAT/ 04459/2019.


\end{document}